\newtheorem{theorem}{Theorem}[section]
\newtheorem{axiom}[theorem]{Axiom}
\newtheorem{conjecture}[theorem]{Conjecture}
\newtheorem{corollary}[theorem]{Corollary}
\newtheorem{definition}[theorem]{Definition}
\newtheorem{example}[theorem]{Example}
\newtheorem{exercise}[theorem]{Exercise}
\newtheorem{lemma}[theorem]{Lemma}
\newtheorem{proposition}[theorem]{Proposition}
\newtheorem{remark}[theorem]{Remark}
\newenvironment{proof}[1][Proof]{\noindent\textbf{#1.} }{\ \rule{0.5em}{0.5em}}
\let\pdfoutput=\undefined\fi
\chardef\@x10\chardef\@xv60
\def\tcitime{
\def\@time{%
  \@minute\time\@hour\@minute\divide\@hour\@xv
  \ifnum\@hour<\@x 0\fi\the\@hour:%
  \multiply\@hour\@xv\advance\@minute-\@hour
  \ifnum\@minute<\@x 0\fi\the\@minute
  }}%
\def\x@hyperref#1#2#3{%
   \catcode`\~ = 12
   \catcode`\$ = 12
   \catcode`\_ = 12
   \catcode`\# = 12
   \catcode`\& = 12
   \y@hyperref{#1}{#2}{#3}%
}
\def\y@hyperref#1#2#3#4{%
   #2\ref{#4}#3
   \catcode`\~ = 13
   \catcode`\$ = 3
   \catcode`\_ = 8
   \catcode`\# = 6
   \catcode`\& = 4
}
\def\QCTOpt[#1]#2{%
  \def\QCTOptB{#1}
  \def\QCTOptA{#2}
}
\def\QCTNOpt#1{%
  \def\QCTOptA{#1}
  \let\QCTOptB\empty
}
\def\Qct{%
  \@ifnextchar[{%
    \QCTOpt}{\QCTNOpt}
}
\def\QCBOpt[#1]#2{%
  \def\QCBOptB{#1}%
  \def\QCBOptA{#2}%
}
\def\QCBNOpt#1{%
  \def\QCBOptA{#1}%
  \let\QCBOptB\empty
}
\def\Qcb{%
  \@ifnextchar[{%
    \QCBOpt}{\QCBNOpt}%
}
\def\PrepCapArgs{%
  \ifx\QCBOptA\empty
    \ifx\QCTOptA\empty
      {}%
    \else
      \ifx\QCTOptB\empty
        {\QCTOptA}%
      \else
        [\QCTOptB]{\QCTOptA}%
      \fi
    \fi
  \else
    \ifx\QCBOptA\empty
      {}%
    \else
      \ifx\QCBOptB\empty
        {\QCBOptA}%
      \else
        [\QCBOptB]{\QCBOptA}%
      \fi
    \fi
  \fi
}
\def\GRAPHICSPS#1{%
 \ifcase\GRAPHICSTYPE
   \special{ps: #1}%
 \or
   \special{language "PS", include "#1"}%
 \fi
}%
\def\graffile#1#2#3#4{%
    \bgroup
	   \@inlabelfalse
       \leavevmode
       \@ifundefined{bbl@deactivate}{\def~{\string~}}{\activesoff}%
        \raise -#4 \BOXTHEFRAME{%
           \hbox to #2{\raise #3\hbox to #2{\null #1\hfil}}}%
    \egroup
}%
\def\draftbox#1#2#3#4{%
 \leavevmode\raise -#4 \hbox{%
  \frame{\rlap{\protect\tiny #1}\hbox to #2%
   {\vrule height#3 width\z@ depth\z@\hfil}%
  }%
 }%
}%
\let\nographics=\@msidraft
\newif\ifwasdraft
\def\GRAPHIC#1#2#3#4#5{%
   \ifnum\@msidraft=\@ne\draftbox{#2}{#3}{#4}{#5}%
   \else\graffile{#1}{#3}{#4}{#5}%
   \fi
}
\def\addtoLaTeXparams#1{%
    \edef\LaTeXparams{\LaTeXparams #1}}%
\newif\ifBoxFrame \BoxFramefalse
\newif\ifOverFrame \OverFramefalse
\newif\ifUnderFrame \UnderFramefalse
\def\BOXTHEFRAME#1{%
   \hbox{%
      \ifBoxFrame
         \frame{#1}%
      \else
         {#1}%
      \fi
   }%
}
\def\doFRAMEparams#1{\BoxFramefalse\OverFramefalse\UnderFramefalse\readFRAMEparams#1\end}%
\def\readFRAMEparams#1{%
 \ifx#1\end%
  \let\next=\relax
  \else
  \ifx#1i\dispkind=\z@\fi
  \ifx#1d\dispkind=\@ne\fi
  \ifx#1f\dispkind=\tw@\fi
  \ifx#1t\addtoLaTeXparams{t}\fi
  \ifx#1b\addtoLaTeXparams{b}\fi
  \ifx#1p\addtoLaTeXparams{p}\fi
  \ifx#1h\addtoLaTeXparams{h}\fi
  \ifx#1X\BoxFrametrue\fi
  \ifx#1O\OverFrametrue\fi
  \ifx#1U\UnderFrametrue\fi
  \ifx#1w
    \ifnum\@msidraft=1\wasdrafttrue\else\wasdraftfalse\fi
    \@msidraft=\@ne
  \fi
  \let\next=\readFRAMEparams
  \fi
 \next
 }%
\def\IFRAME#1#2#3#4#5#6{%
      \bgroup
      \let\QCTOptA\empty
      \let\QCTOptB\empty
      \let\QCBOptA\empty
      \let\QCBOptB\empty
      #6%
      \parindent=0pt
      \leftskip=0pt
      \rightskip=0pt
      \setbox0=\hbox{\QCBOptA}%
      \@tempdima=#1\relax
      \ifOverFrame
          \typeout{This is not implemented yet}%
          \show\HELP
      \else
         \ifdim\wd0>\@tempdima
            \advance\@tempdima by \@tempdima
            \ifdim\wd0 >\@tempdima
               \setbox1 =\vbox{%
                  \unskip\hbox to \@tempdima{\hfill\GRAPHIC{#5}{#4}{#1}{#2}{#3}\hfill}%
                  \unskip\hbox to \@tempdima{\parbox[b]{\@tempdima}{\QCBOptA}}%
               }%
               \wd1=\@tempdima
            \else
               \textwidth=\wd0
               \setbox1 =\vbox{%
                 \noindent\hbox to \wd0{\hfill\GRAPHIC{#5}{#4}{#1}{#2}{#3}\hfill}\\%
                 \noindent\hbox{\QCBOptA}%
               }%
               \wd1=\wd0
            \fi
         \else
            \ifdim\wd0>0pt
              \hsize=\@tempdima
              \setbox1=\vbox{%
                \unskip\GRAPHIC{#5}{#4}{#1}{#2}{0pt}%
                \break
                \unskip\hbox to \@tempdima{\hfill \QCBOptA\hfill}%
              }%
              \wd1=\@tempdima
           \else
              \hsize=\@tempdima
              \setbox1=\vbox{%
                \unskip\GRAPHIC{#5}{#4}{#1}{#2}{0pt}%
              }%
              \wd1=\@tempdima
           \fi
         \fi
         \@tempdimb=\ht1
         \advance\@tempdimb by -#2
         \advance\@tempdimb by #3
         \leavevmode
         \raise -\@tempdimb \hbox{\box1}%
      \fi
      \egroup%
}%
\def\DFRAME#1#2#3#4#5{%
  \vspace\topsep
  \hfil\break
  \bgroup
     \leftskip\@flushglue
	 \rightskip\@flushglue
	 \parindent\z@
	 \parfillskip\z@skip
     \let\QCTOptA\empty
     \let\QCTOptB\empty
     \let\QCBOptA\empty
     \let\QCBOptB\empty
	 \vbox\bgroup
        \ifOverFrame 
           #5\QCTOptA\par
        \fi
        \GRAPHIC{#4}{#3}{#1}{#2}{\z@}%
        \ifUnderFrame 
           \break#5\QCBOptA
        \fi
	 \egroup
  \egroup
  \vspace\topsep
  \break
}%
\def\FFRAME#1#2#3#4#5#6#7{%
  \@ifundefined{floatstyle}
    {
     \begin{figure}[#1]%
    }
    {
	 \ifx#1h
      \begin{figure}[H]%
	 \else
      \begin{figure}[#1]%
	 \fi
	}
  \let\QCTOptA\empty
  \let\QCTOptB\empty
  \let\QCBOptA\empty
  \let\QCBOptB\empty
  \ifOverFrame
    #4
    \ifx\QCTOptA\empty
    \else
      \ifx\QCTOptB\empty
        \caption{\QCTOptA}%
      \else
        \caption[\QCTOptB]{\QCTOptA}%
      \fi
    \fi
    \ifUnderFrame\else
      \label{#5}%
    \fi
  \else
    \UnderFrametrue%
  \fi
  \begin{center}\GRAPHIC{#7}{#6}{#2}{#3}{\z@}\end{center}%
  \ifUnderFrame
    #4
    \ifx\QCBOptA\empty
      \caption{}%
    \else
      \ifx\QCBOptB\empty
        \caption{\QCBOptA}%
      \else
        \caption[\QCBOptB]{\QCBOptA}%
      \fi
    \fi
    \label{#5}%
  \fi
  \end{figure}%
 }%
\def\makeactives{
  \catcode`\"=\active
  \catcode`\;=\active
  \catcode`\:=\active
  \catcode`\'=\active
  \catcode`\~=\active
}
   \gdef\activesoff{%
      \def"{\string"}%
      \def;{\string;}%
      \def:{\string:}%
      \def'{\string'}%
      \def~{\string~}%
    }
\def\FRAME#1#2#3#4#5#6#7#8{%
 \bgroup
 \ifnum\@msidraft=\@ne
   \wasdrafttrue
 \else
   \wasdraftfalse%
 \fi
 \def\LaTeXparams{}%
 \dispkind=\z@
 \def\LaTeXparams{}%
 \doFRAMEparams{#1}%
 \ifnum\dispkind=\z@\IFRAME{#2}{#3}{#4}{#7}{#8}{#5}\else
  \ifnum\dispkind=\@ne\DFRAME{#2}{#3}{#7}{#8}{#5}\else
   \ifnum\dispkind=\tw@
    \edef\@tempa{\noexpand\FFRAME{\LaTeXparams}}%
    \@tempa{#2}{#3}{#5}{#6}{#7}{#8}%
    \fi
   \fi
  \fi
  \ifwasdraft\@msidraft=1\else\@msidraft=0\fi{}%
  \egroup
 }%
\def\TEXUX#1{"texux"}
\def\func#1{\mathop{\rm #1}\nolimits}%
\long\def\QQQ#1#2{%
     \long\expandafter\def\csname#1\endcsname{#2}}%
\long\def\QQA#1#2{}%
\def\QTR#1#2{{\csname#1\endcsname {#2}}}%
\def\EXPAND#1[#2]#3{}%
\def\NOEXPAND#1[#2]#3{}%
\def\LaTeXparent#1{}%
\def\ChildStyles#1{}%
\def\ChildDefaults#1{}%
\def\QTagDef#1#2#3{}%
  \providecommand{\UNICODE}[2][]{\protect\rule{.1in}{.1in}}
  \providecommand{\U}[1]{\protect\rule{.1in}{.1in}}
\def\QQfnmark#1{\footnotemark}
 \def\abstract{%
  \if@twocolumn
   \section*{Abstract (Not appropriate in this style!)}%
   \else \small 
   \begin{center}{\bf Abstract\vspace{-.5em}\vspace{\z@}}\end{center}%
   \quotation 
   \fi
  }%
   \def\registered{\relax\ifmmode{}\r@gistered
                    \else$\m@th\r@gistered$\fi}%
 \def\r@gistered{^{\ooalign
  {\hfil\raise.07ex\hbox{$\scriptstyle\rm\text{R}$}\hfil\crcr
  \mathhexbox20D}}}}{}%
\newdimen\theight
\def\newfmtname{LaTeX2e}
  \DeclareOldFontCommand{\rm}{\normalfont\rmfamily}{\mathrm}
  \DeclareOldFontCommand{\sf}{\normalfont\sffamily}{\mathsf}
  \DeclareOldFontCommand{\tt}{\normalfont\ttfamily}{\mathtt}
  \DeclareOldFontCommand{\bf}{\normalfont\bfseries}{\mathbf}
  \DeclareOldFontCommand{\it}{\normalfont\itshape}{\mathit}
  \DeclareOldFontCommand{\sl}{\normalfont\slshape}{\@nomath\sl}
  \DeclareOldFontCommand{\sc}{\normalfont\scshape}{\@nomath\sc}
\def\alpha{{\Greekmath 010B}}%
\def\beta{{\Greekmath 010C}}%
\def\gamma{{\Greekmath 010D}}%
\def\delta{{\Greekmath 010E}}%
\def\epsilon{{\Greekmath 010F}}%
\def\zeta{{\Greekmath 0110}}%
\def\eta{{\Greekmath 0111}}%
\def\theta{{\Greekmath 0112}}%
\def\iota{{\Greekmath 0113}}%
\def\kappa{{\Greekmath 0114}}%
\def\lambda{{\Greekmath 0115}}%
\def\mu{{\Greekmath 0116}}%
\def\nu{{\Greekmath 0117}}%
\def\xi{{\Greekmath 0118}}%
\def\pi{{\Greekmath 0119}}%
\def\rho{{\Greekmath 011A}}%
\def\sigma{{\Greekmath 011B}}%
\def\tau{{\Greekmath 011C}}%
\def\upsilon{{\Greekmath 011D}}%
\def\phi{{\Greekmath 011E}}%
\def\chi{{\Greekmath 011F}}%
\def\psi{{\Greekmath 0120}}%
\def\omega{{\Greekmath 0121}}%
\def\varepsilon{{\Greekmath 0122}}%
\def\vartheta{{\Greekmath 0123}}%
\def\varpi{{\Greekmath 0124}}%
\def\varrho{{\Greekmath 0125}}%
\def\varsigma{{\Greekmath 0126}}%
\def\varphi{{\Greekmath 0127}}%
\def\nabla{{\Greekmath 0272}}
\def\FindBoldGroup{%
   {\setbox0=\hbox{$\mathbf{x\global\edef\theboldgroup{\the\mathgroup}}$}}%
}
\def\Greekmath#1#2#3#4{%
    \if@compatibility
        \ifnum\mathgroup=\symbold
           \mathchoice{\mbox{\boldmath$\displaystyle\mathchar"#1#2#3#4$}}%
                      {\mbox{\boldmath$\textstyle\mathchar"#1#2#3#4$}}%
                      {\mbox{\boldmath$\scriptstyle\mathchar"#1#2#3#4$}}%
                      {\mbox{\boldmath$\scriptscriptstyle\mathchar"#1#2#3#4$}}%
        \else
           \mathchar"#1#2#3#4%
        \fi 
    \else 
        \FindBoldGroup
        \ifnum\mathgroup=\theboldgroup 
           \mathchoice{\mbox{\boldmath$\displaystyle\mathchar"#1#2#3#4$}}%
                      {\mbox{\boldmath$\textstyle\mathchar"#1#2#3#4$}}%
                      {\mbox{\boldmath$\scriptstyle\mathchar"#1#2#3#4$}}%
                      {\mbox{\boldmath$\scriptscriptstyle\mathchar"#1#2#3#4$}}%
        \else
           \mathchar"#1#2#3#4%
        \fi     	    
	  \fi}
\newif\ifGreekBold  \GreekBoldfalse
\let\SAVEPBF=\pbf
\def\pbf{\GreekBoldtrue\SAVEPBF}%
  \newcounter{equationnumber}  
  \def\mathletters{%
     \addtocounter{equation}{1}
     \edef\@currentlabel{\theequation}%
     \setcounter{equationnumber}{\c@equation}
     \setcounter{equation}{0}%
     \edef\theequation{\@currentlabel\noexpand\alph{equation}}%
  }
    \def\BibTeX{{\rm B\kern-.05em{\sc i\kern-.025em b}\kern-.08em
                 T\kern-.1667em\lower.7ex\hbox{E}\kern-.125emX}}}{}%
\def\AmS{{\protect\usefont{OMS}{cmsy}{m}{n}%
                A\kern-.1667em\lower.5ex\hbox{M}\kern-.125emS}}}{}%
\def\@@eqncr{\let\@tempa\relax
    \ifcase\@eqcnt \def\@tempa{& & &}\or \def\@tempa{& &}%
      \else \def\@tempa{&}\fi
     \@tempa
     \if@eqnsw
        \iftag@
           \@taggnum
        \else
           \@eqnnum\stepcounter{equation}%
        \fi
     \fi
     \global\tag@false
     \global\@eqnswtrue
     \global\@eqcnt\z@\cr}
\def\TCItag{\@ifnextchar*{\@TCItagstar}{\@TCItag}}
\def\@TCItag#1{%
    \global\tag@true
    \global\def\@taggnum{(#1)}}
\def\@TCItagstar*#1{%
    \global\tag@true
    \global\def\@taggnum{#1}}
\def\dint{\mathop{\displaystyle \int}}%
\def\ExitTCILatex{\makeatother }
\if@compatibility\message{amsmath already loaded}\fi\aftergroup\ExitTCILatex}
\if@compatibility\message{amstex already loaded}\fi\aftergroup\ExitTCILatex}
\if@compatibility\message{amsgen already loaded}\fi\aftergroup\ExitTCILatex}
\let\DOTSI\relax
\def\RIfM@{\relax\ifmmode}%
\def\FN@{\futurelet\next}%
\def\iint{\DOTSI\intno@\tw@\FN@\ints@}%
\def\iiint{\DOTSI\intno@\thr@@\FN@\ints@}%
\def\iiiint{\DOTSI\intno@4 \FN@\ints@}%
\def\idotsint{\DOTSI\intno@\z@\FN@\ints@}%
\def\ints@{\findlimits@\ints@@}%
\newif\iflimtoken@
\newif\iflimits@
\def\findlimits@{\limtoken@true\ifx\next\limits\limits@true
 \else\ifx\next\nolimits\limits@false\else
 \limtoken@false\ifx\ilimits@\nolimits\limits@false\else
 \ifinner\limits@false\else\limits@true\fi\fi\fi\fi}%
\def\multint@{\int\ifnum\intno@=\z@\intdots@                          
 \else\intkern@\fi                                                    
 \ifnum\intno@>\tw@\int\intkern@\fi                                   
 \ifnum\intno@>\thr@@\int\intkern@\fi                                 
 \int}
\def\multintlimits@{\intop\ifnum\intno@=\z@\intdots@\else\intkern@\fi
 \ifnum\intno@>\tw@\intop\intkern@\fi
 \ifnum\intno@>\thr@@\intop\intkern@\fi\intop}%
\def\intic@{%
    \mathchoice{\hskip.5em}{\hskip.4em}{\hskip.4em}{\hskip.4em}}%
\def\negintic@{\mathchoice
 {\hskip-.5em}{\hskip-.4em}{\hskip-.4em}{\hskip-.4em}}%
\def\ints@@{\iflimtoken@                                              
 \def\ints@@@{\iflimits@\negintic@
   \mathop{\intic@\multintlimits@}\limits                             
  \else\multint@\nolimits\fi                                          
  \eat@}
 \else                                                                
 \def\ints@@@{\iflimits@\negintic@
  \mathop{\intic@\multintlimits@}\limits\else
  \multint@\nolimits\fi}\fi\ints@@@}%
\def\intkern@{\mathchoice{\!\!\!}{\!\!}{\!\!}{\!\!}}%
\def\plaincdots@{\mathinner{\cdotp\cdotp\cdotp}}%
\def\intdots@{\mathchoice{\plaincdots@}%
 {{\cdotp}\mkern1.5mu{\cdotp}\mkern1.5mu{\cdotp}}%
 {{\cdotp}\mkern1mu{\cdotp}\mkern1mu{\cdotp}}%
 {{\cdotp}\mkern1mu{\cdotp}\mkern1mu{\cdotp}}}%
\def\RIfM@{\relax\protect\ifmmode}
\def\text{\RIfM@\expandafter\text@\else\expandafter\mbox\fi}
\let\nfss@text\text
\def\text@#1{\mathchoice
   {\textdef@\displaystyle\f@size{#1}}%
   {\textdef@\textstyle\tf@size{\firstchoice@false #1}}%
   {\textdef@\textstyle\sf@size{\firstchoice@false #1}}%
   {\textdef@\textstyle \ssf@size{\firstchoice@false #1}}%
   \glb@settings}
\def\textdef@#1#2#3{\hbox{{%
                    \everymath{#1}%
                    \let\f@size#2\selectfont
                    #3}}}
\newif\iffirstchoice@
\def\Let@{\relax\iffalse{\fi\let\\=\cr\iffalse}\fi}%
\def\vspace@{\def\vspace##1{\crcr\noalign{\vskip##1\relax}}}%
\def\multilimits@{\bgroup\vspace@\Let@
 \baselineskip\fontdimen10 \scriptfont\tw@
 \advance\baselineskip\fontdimen12 \scriptfont\tw@
 \lineskip\thr@@\fontdimen8 \scriptfont\thr@@
 \lineskiplimit\lineskip
 \vbox\bgroup\ialign\bgroup\hfil$\m@th\scriptstyle{##}$\hfil\crcr}%
\def\Sb{_\multilimits@}%
\def\endSb{\crcr\egroup\egroup\egroup}%
\def\Sp{^\multilimits@}%
\newdimen\ex@
\def\rightarrowfill@#1{$#1\m@th\mathord-\mkern-6mu\cleaders
 \hbox{$#1\mkern-2mu\mathord-\mkern-2mu$}\hfill
 \mkern-6mu\mathord\rightarrow$}%
\def\leftarrowfill@#1{$#1\m@th\mathord\leftarrow\mkern-6mu\cleaders
 \hbox{$#1\mkern-2mu\mathord-\mkern-2mu$}\hfill\mkern-6mu\mathord-$}%
\def\leftrightarrowfill@#1{$#1\m@th\mathord\leftarrow
\mkern-6mu\cleaders
 \hbox{$#1\mkern-2mu\mathord-\mkern-2mu$}\hfill
 \mkern-6mu\mathord\rightarrow$}%
\def\overrightarrow{\mathpalette\overrightarrow@}%
\def\overrightarrow@#1#2{\vbox{\ialign{##\crcr\rightarrowfill@#1\crcr
 \noalign{\kern-\ex@\nointerlineskip}$\m@th\hfil#1#2\hfil$\crcr}}}%
\def\overleftarrow{\mathpalette\overleftarrow@}%
\def\overleftarrow@#1#2{\vbox{\ialign{##\crcr\leftarrowfill@#1\crcr
 \noalign{\kern-\ex@\nointerlineskip}$\m@th\hfil#1#2\hfil$\crcr}}}%
\def\overleftrightarrow{\mathpalette\overleftrightarrow@}%
\def\overleftrightarrow@#1#2{\vbox{\ialign{##\crcr
   \leftrightarrowfill@#1\crcr
 \noalign{\kern-\ex@\nointerlineskip}$\m@th\hfil#1#2\hfil$\crcr}}}%
\def\underrightarrow{\mathpalette\underrightarrow@}%
\def\underrightarrow@#1#2{\vtop{\ialign{##\crcr$\m@th\hfil#1#2\hfil
  $\crcr\noalign{\nointerlineskip}\rightarrowfill@#1\crcr}}}%
\def\underleftarrow{\mathpalette\underleftarrow@}%
\def\underleftarrow@#1#2{\vtop{\ialign{##\crcr$\m@th\hfil#1#2\hfil
  $\crcr\noalign{\nointerlineskip}\leftarrowfill@#1\crcr}}}%
\def\underleftrightarrow{\mathpalette\underleftrightarrow@}%
\def\underleftrightarrow@#1#2{\vtop{\ialign{##\crcr$\m@th
  \hfil#1#2\hfil$\crcr
 \noalign{\nointerlineskip}\leftrightarrowfill@#1\crcr}}}%
\def\qopnamewl@#1{\mathop{\operator@font#1}\nlimits@}
\let\nlimits@\displaylimits
\def\setboxz@h{\setbox\z@\hbox}
\def\varlim@#1#2{\mathop{\vtop{\ialign{##\crcr
 \hfil$#1\m@th\operator@font lim$\hfil\crcr
 \noalign{\nointerlineskip}#2#1\crcr
 \noalign{\nointerlineskip\kern-\ex@}\crcr}}}}
 \def\rightarrowfill@#1{\m@th\setboxz@h{$#1-$}\ht\z@\z@
  $#1\copy\z@\mkern-6mu\cleaders
  \hbox{$#1\mkern-2mu\box\z@\mkern-2mu$}\hfill
  \mkern-6mu\mathord\rightarrow$}
\def\leftarrowfill@#1{\m@th\setboxz@h{$#1-$}\ht\z@\z@
  $#1\mathord\leftarrow\mkern-6mu\cleaders
  \hbox{$#1\mkern-2mu\copy\z@\mkern-2mu$}\hfill
  \mkern-6mu\box\z@$}
\def\projlim{\qopnamewl@{proj\,lim}}
\def\injlim{\qopnamewl@{inj\,lim}}
\def\varinjlim{\mathpalette\varlim@\rightarrowfill@}
\def\varprojlim{\mathpalette\varlim@\leftarrowfill@}
\def\varliminf{\mathpalette\varliminf@{}}
\def\varliminf@#1{\mathop{\underline{\vrule\@depth.2\ex@\@width\z@
   \hbox{$#1\m@th\operator@font lim$}}}}
\def\varlimsup{\mathpalette\varlimsup@{}}
\def\varlimsup@#1{\mathop{\overline
  {\hbox{$#1\m@th\operator@font lim$}}}}
\def\align{\@verbatim \frenchspacing\@vobeyspaces \@alignverbatim
You are using the "align" environment in a style in which it is not defined.}
\let\csname endalign*\endcsname =\endtrivlist
\def\alignat{\@verbatim \frenchspacing\@vobeyspaces \@alignatverbatim
You are using the "alignat" environment in a style in which it is not defined.}
\let\csname endalignat*\endcsname =\endtrivlist
\def\xalignat{\@verbatim \frenchspacing\@vobeyspaces \@xalignatverbatim
You are using the "xalignat" environment in a style in which it is not defined.}
\let\csname endxalignat*\endcsname =\endtrivlist
\def\gather{\@verbatim \frenchspacing\@vobeyspaces \@gatherverbatim
You are using the "gather" environment in a style in which it is not defined.}
\let\csname endgather*\endcsname =\endtrivlist
\def\multiline{\@verbatim \frenchspacing\@vobeyspaces \@multilineverbatim
You are using the "multiline" environment in a style in which it is not defined.}
\let\csname endmultiline*\endcsname =\endtrivlist
\def\arrax{\@verbatim \frenchspacing\@vobeyspaces \@arraxverbatim
You are using a type of "array" construct that is only allowed in AmS-LaTeX.}
\def\tabulax{\@verbatim \frenchspacing\@vobeyspaces \@tabulaxverbatim
You are using a type of "tabular" construct that is only allowed in AmS-LaTeX.}
\let\csname endarrax*\endcsname =\endtrivlist
\let\csname endtabulax*\endcsname =\endtrivlist
 \def\endequation{%
     \ifmmode\ifinner 
      \iftag@
        \addtocounter{equation}{-1} 
        $\hfil
           \displaywidth\linewidth\@taggnum\egroup \endtrivlist
        \global\tag@false
        \global\@ignoretrue   
      \else
        $\hfil
           \displaywidth\linewidth\@eqnnum\egroup \endtrivlist
        \global\tag@false
        \global\@ignoretrue 
      \fi
     \else   
      \iftag@
        \addtocounter{equation}{-1} 
        \eqno \hbox{\@taggnum}
        \global\tag@false%
        $$\global\@ignoretrue
      \else
        \eqno \hbox{\@eqnnum}
        $$\global\@ignoretrue
      \fi
     \fi\fi
 } 
 \newif\iftag@ \tag@false
 \def\TCItag{\@ifnextchar*{\@TCItagstar}{\@TCItag}}
 \def\@TCItag#1{%
     \global\tag@true
     \global\def\@taggnum{(#1)}}
 \def\@TCItagstar*#1{%
     \global\tag@true
     \global\def\@taggnum{#1}}
     \def\tag{\@ifnextchar*{\@tagstar}{\@tag}}
     \def\@tag#1{%
         \global\tag@true
         \global\def\@taggnum{(#1)}}
     \def\@tagstar*#1{%
         \global\tag@true
         \global\def\@taggnum{#1}}
\def\dfrac#1#2{{\displaystyle {#1 \over #2}}}%
\begin{document}

\title{Transmission Problem Between Two Herschel-Bulkley Fluids}
\date{}
\author{Farid Messelmi\thanks{%
Department of Mathematics and Computer Sciences, University of Djelfa Po Box
3117, Djelfa 17000, Algeria. }}
\maketitle

\begin{abstract}
The paper is devoted to the study of transmission probelm between two
Herschel-Bulkley fluids with different viscosities, yield limits and power
law index.

\textit{\noindent Keywords}\textbf{.} Herschel-Bulkley fluid, interface,
transmission.

2000 Mathematics Subject Classiffication.76A05, 49J40, 76B15.
\end{abstract}

\section{Introduction}

The rigid viscoplastic and incompressible fluid\ of Herschel-Bulkley has
been scrutinized and studied by mathematicians, physicists and engineers as
intensively as the Navier-Stokes. While this model describes adequately a
large class of flows. It has been used to model the flow of metals, plastic
solids and a variety of polymers like paints. Due to existence of yield
limit, the model can capture phenomena connected with the development of
discontinuous stresses. The literature concerning this topic is extensive;
see e.g. \cite{r1}, \cite{r6}, \cite{r7}, \cite{r8} and references therein.

The purpose of this paper is to formulate and prove the existence of weak
solutions for a class of boundary transmission problems for the
Herschel-Bulkley fluid. To this aim we consider a transmission problem
between two Herschel-Bulkley fluids with different viscosities, yield limits
and power law index. We suppose that there is no-slip at the contact
interface. Such problem can model the superposition of two paints in drawing
process.

The paper is organized as follows. In Section 2 we present the steady-state
mechanical problem of transmission between two Herschel-Bulkley fluids. We
introduce some notations and preliminaries. Moreover, we derive the
variational formulation of the problem. In Section 3, we are interested in
the existence of weak solutions.

\section{Problem Statement}

We consider a mathematical problem modelling the steady-state transmission
problem between two rigid, viscoplastic and incompressible Herschel-Bulkley
fluids flow. The first fluid occupies a bounded domain $\Omega _{1}\subset 
\mathbb{R}
^{n}$ $\left( n=2,3\right) $ with the boundary $\partial \Omega _{1}$ of
class $C^{1}.$ The second one occupies a bounded domain $\Omega _{2}\subset 
\mathbb{R}
^{n}$ $\left( n=2,3\right) $ with the boundary $\partial \Omega _{2}$ of
class $C^{1}.$ We denote by $\Omega $ the domain $\Omega _{1}\cup \Omega
_{2} $ and we suppose that

\begin{equation*}
\partial \Omega _{1}=\Gamma _{0}\cup \Gamma _{1}\text{ and }\partial \Omega
_{2}=\Gamma _{0}\cup \Gamma _{2},
\end{equation*}%
where $\Gamma _{0},\Gamma _{1},\Gamma _{2}$ are measurable domains and $%
meas\left( \Gamma _{1}\right) ,meas\left( \Gamma _{2}\right) >0.$The fluids
are acted upon by given volume forces of densities $\mathbf{f}_{1},\mathbf{f}%
_{2}$ respectively.

We denote by $\mathbb{S}_{n}$ the space of symmetric tensors on $%
\mathbb{R}
^{n}.$ We define the inner product and the Euclidean norm on $%
\mathbb{R}
^{n}$ and $\mathbb{S}_{n},$ respectively, by%
\begin{equation*}
\mathbf{u}\cdot \mathbf{v}=u_{l}v_{l}\ \text{ \ }\forall \text{ }\mathbf{u},%
\text{ }\mathbf{v}\in 
\mathbb{R}
^{n}\text{ \ and}\ \ \sigma \cdot \tau =\sigma _{lm}\tau _{lm}\ \text{ \ }%
\forall \text{ }\sigma ,\text{ }\tau \in \mathbb{S}_{n}.
\end{equation*}%
\begin{equation*}
\left\vert \mathbf{u}\right\vert =\left( \mathbf{u}\cdot \mathbf{u}\right) ^{%
\frac{1}{2}}\ \ \ \forall \mathbf{u}\in 
\mathbb{R}
^{n}\text{\ \ and\ \ }\left\vert \sigma \right\vert =\left( \sigma \cdot
\sigma \right) ^{\frac{1}{2}}\ \ \ \forall \sigma \in \mathbb{S}_{n}.
\end{equation*}

Here and below,\ the indices$\ i$ and $j$\ run from\ $1$ to $n$ and
Einstein's convention is used. We denote by $\tilde{\sigma}$ the deviator of 
$\sigma =\left( \sigma _{lm}\right) $ given by%
\begin{equation*}
\tilde{\sigma}=\left( \tilde{\sigma}_{lm}\right) ,\text{ \ }\tilde{\sigma}%
_{lm}=\sigma _{lm}-\dfrac{\sigma _{ll}}{n}\delta _{lm},
\end{equation*}%
\noindent where $\delta =\left( \delta _{lm}\right) $ denotes the identity
tensor.

Let $1<p\leq 2.$ We consider the rate of deformation operator\ defined for
every $\mathbf{u}\in W^{1,p}\left( \Omega \right) ^{n}$ by%
\begin{equation*}
D\left( \mathbf{u}\right) =\left( D_{lm}\left( \mathbf{u}\right) \right) ,%
\text{ \ \ }D_{lm}\left( \mathbf{u}\right) =\dfrac{1}{2}\left(
u_{l,m}+u_{l,m}\right) .
\end{equation*}

We denote by $\mathbf{n}$ the unit outward normal vector on the boundary $%
\Gamma _{0}$ oriented to the exterior of $\Omega _{1}$ and to the interior
of $\Omega _{2},$ see the figure below$.$ For every vector field $\mathbf{v}%
\in W^{1,p}\left( \Omega _{i}\right) ^{n}$ we also write $\mathbf{v}$ for
its trace on $\partial \Omega _{i},$ $i=1,2.$

The steady-state transmission problem for the Herschel-Bulkley fluids is
given by the following mechanical problem.

\textit{Problem P1.}\ Find the velocity fields $\mathbf{u}_{i}=\left(
u_{il}\right) :\Omega _{i}\longrightarrow 
\mathbb{R}
^{n}\ $and the stress\ field $\sigma _{i}=\left( \sigma _{ilm}\right)
:\Omega _{i}\longrightarrow \mathbb{S}_{n},$ $i=1,$ $2$\ such that%
\begin{equation}
\mathbf{u}_{1}\cdot \nabla \mathbf{u}_{1}=\func{div}\sigma _{1}+\mathbf{f}%
_{1}\ \text{in\ }\Omega _{1}.  \tag{2.1}  \label{2.1}
\end{equation}%
\begin{equation}
\mathbf{u}_{2}\cdot \nabla \mathbf{u}_{2}=\func{div}\sigma _{2}+\mathbf{f}%
_{2}\ \text{in\ }\Omega _{2}.  \tag{2.2}  \label{2.2}
\end{equation}%
\begin{equation}
\left. 
\begin{array}{l}
\tilde{\sigma}_{1}=\mu _{1}\left\vert D\left( \mathbf{u}_{1}\right)
\right\vert ^{p_{1}-2}D\left( \mathbf{u}_{1}\right) +g_{1}\dfrac{D\left( 
\mathbf{u}_{1}\right) }{\left\vert D\left( \mathbf{u}_{1}\right) \right\vert 
}\text{\ \ if\ }\left\vert D\left( \mathbf{u}_{1}\right) \right\vert \neq 0
\\ 
\left\vert \tilde{\sigma}_{1}\right\vert \leq g_{1}\ \text{\ \ \ \ \ \ \ \ \
\ \ \ \ \ \ \ \ \ \ \ \ \ \ \ \ \ \ \ \ \ \ \ \ \ \ \ \ \ \ \ \ \ \ if }%
\left\vert D\left( \mathbf{u}_{1}\right) \right\vert =0%
\end{array}%
\right\} \text{\ in\ }\Omega _{1}.  \tag{2.3}  \label{2.3}
\end{equation}%
\begin{equation}
\left. 
\begin{array}{l}
\tilde{\sigma}_{2}=\mu _{2}\left\vert D\left( \mathbf{u}_{2}\right)
\right\vert ^{p_{2}-2}D\left( \mathbf{u}_{2}\right) +g_{2}\dfrac{D\left( 
\mathbf{u}_{2}\right) }{\left\vert D\left( \mathbf{u}_{2}\right) \right\vert 
}\text{\ \ if\ }\left\vert D\left( \mathbf{u}_{2}\right) \right\vert \neq 0
\\ 
\left\vert \tilde{\sigma}_{2}\right\vert \leq g_{2}\ \text{\ \ \ \ \ \ \ \ \
\ \ \ \ \ \ \ \ \ \ \ \ \ \ \ \ \ \ \ \ \ \ \ \ \ \ \ \ \ \ \ \ \ \ if }%
\left\vert D\left( \mathbf{u}_{2}\right) \right\vert =0%
\end{array}%
\right\} \text{\ in\ }\Omega _{2}.  \tag{2.4}  \label{2.4}
\end{equation}%
\begin{equation}
\func{div}\mathbf{u}_{1}=0\text{\ in\ }\Omega _{1}.  \tag{2.5}  \label{2.5}
\end{equation}%
\begin{equation}
\func{div}\mathbf{u}_{2}=0\text{\ in\ }\Omega _{2}.  \tag{2.6}  \label{2.6}
\end{equation}%
\begin{equation}
\mathbf{u}_{1}=0\text{\ on\ }\Gamma _{1}.  \tag{2.7}  \label{2.7}
\end{equation}%
\begin{equation}
\mathbf{u}_{2}=0\text{\ on\ }\Gamma _{2}.  \tag{2.8}  \label{2.8}
\end{equation}%
\begin{equation}
\mathbf{u}_{1}-\mathbf{u}_{2}=0\text{\ on\ }\Gamma _{0}.  \tag{2.9}
\label{2.9}
\end{equation}%
\begin{equation}
\sigma _{1}\cdot \mathbf{n}-\sigma _{2}\cdot \mathbf{n}=0\text{\ on\ }\Gamma
_{0}.  \tag{2.10}  \label{2.10}
\end{equation}

Here, the flows in the domains $\Omega _{1},$ $\Omega _{2}$ are given,
respectively, by equations (\ref{2.1}) and (\ref{2.2}) where the densities
is assumed equal to one for the two fluids. Equations (\ref{2.3}) and (\ref%
{2.4}) represent, respectively, the constitutive laws of the two
Herschel-Bulkley fluids where $\mu _{1},$ $\mu _{2}>0$ and $g_{1},$ $g_{2}>0$
are the consistencies and yield limits of the two fluids, respectively, $%
1<p_{1},$ $p_{2}\leq 2$ are the power law index of the two fluids,
respectively. (\ref{2.5}) and (\ref{2.6}) represent the incompressibility
conditions for the two fluids, respectively. (\ref{2.7}) and (\ref{2.8})
give the velocities on the boundaries $\Gamma _{1}$ and $\Gamma _{2},$
respectively. Finally, on the boundary part $\Gamma _{0},$\ (\ref{2.9})\ and
(\ref{2.10})\ represent the transmission condition for liquid-liquid
interface.

Existence of weak solutions for the flow of Herschel-Bulkley fluid was shown
in 1969 for $p\geq \frac{3n}{n+2}$ for which the energy equality holds and
higher differentiability techniques can be applied, in 1997 for $p\geq \frac{%
2n}{n+1},$ and recently for $p>\frac{2n}{n+2}$ using the Lipschitz
truncation method. Moreover, some existence results has been obtained for
the thermal flow in 2010 concerning the case $p\geq \frac{3n}{n+2},$ see 
\cite{r2}, \cite{r4}, \cite{r5}, \cite{r6}, \cite{r8} and \cite{r9}. Up to
now, there are only a few results concerning the regularity of weak
solutions, especially in three-dimensional domains.

We us consider the following function spaces%
\begin{equation*}
V\left( \Omega _{i}\right) =\left\{ \mathbf{v}\in W^{1,p_{i}}\left( \Omega
_{i}\right) ^{n}:\func{div}\mathbf{v}\text{ in }\Omega _{i}\text{ and }%
\mathbf{v}=0\text{ on }\Gamma _{i}\right\} ,\text{ }i=1,2.
\end{equation*}

\begin{equation*}
V=\left\{ \left( \mathbf{v}_{1},\mathbf{v}_{2}\right) \in V\left( \Omega
_{1}\right) \times V\left( \Omega _{2}\right) :\text{ }\mathbf{v}_{1}-%
\mathbf{v}_{2}=0\text{ on }\Gamma _{0}\right\} .
\end{equation*}

$V\left( \Omega _{i}\right) ,$ $i=1,2$ is a Banach space equipped with the
norm%
\begin{equation*}
\left\Vert \mathbf{v}\right\Vert _{V\left( \Omega _{i}\right) }=\left\Vert 
\mathbf{v}\right\Vert _{W^{1,p_{i}}\left( \Omega _{i}\right) ^{n}},
\end{equation*}%
and $V$ becomes a Banach space for the following norm%
\begin{equation*}
\left\Vert \left( \mathbf{v}_{1},\mathbf{v}_{2}\right) \right\Vert
_{V}=\left\Vert \mathbf{v}_{1}\right\Vert _{V\left( \Omega _{1}\right)
}+\left\Vert \mathbf{v}_{2}\right\Vert _{V\left( \Omega _{2}\right) }.
\end{equation*}

For the rest of this article, we will denote by $c$ possibly different
positive constants depending only on the data of the problem and denoting by 
$p^{\prime }$ the conjugate of $p.$

Let us introduce the functional $B_{i},$ $i=1,2$ and the operator $\phi $
defined by 
\begin{gather}
B_{i}:V\left( \Omega _{i}\right) \times V\left( \Omega _{i}\right) \times
V\left( \Omega _{i}\right) \longrightarrow \mathbf{%
\mathbb{R}
},  \notag \\
B_{i}\left( \mathbf{v}_{1},\mathbf{v}_{2},\mathbf{v}_{3}\right)
=\dint_{\Omega _{i}}\mathbf{v}_{1}\cdot \nabla \mathbf{v}_{2}\cdot \mathbf{v}%
_{3}dx,\text{ }i=1,2\mathbf{.}  \tag{2.11}  \label{2.11}
\end{gather}%
\begin{gather}
\phi :V\longrightarrow V^{\prime },\text{ }\left( \mathbf{u}_{1},\mathbf{u}%
_{2}\right) \longmapsto \phi \left( \mathbf{u}_{1},\mathbf{u}_{2}\right)
:\forall \left( \mathbf{v}_{1},\mathbf{v}_{2}\right) \in V  \notag \\
\left\langle \phi \left( \mathbf{u}_{1},\mathbf{u}_{2}\right) ,\left( 
\mathbf{v}_{1},\mathbf{v}_{2}\right) \right\rangle _{V^{\prime }\times
V}=\mu _{1}\dint_{\Omega _{1}}\left\vert D\left( \mathbf{u}_{1}\right)
\right\vert ^{p_{_{1}}-2}D\left( \mathbf{u}_{1}\right) \cdot D\left( \mathbf{%
v}_{1}\right) dx+  \notag \\
\mu _{2}\dint_{\Omega _{2}}\left\vert D\left( \mathbf{u}_{2}\right)
\right\vert ^{p_{_{2}}-2}D\left( \mathbf{u}_{2}\right) \cdot D\left( \mathbf{%
v}_{2}\right) dx.  \tag{2.12}  \label{2.12}
\end{gather}

We begin by recalling the following, which gives some properties of the
convective operators $B_{i}.$

\begin{lemma}
\textbf{1.} Suppose that%
\begin{equation}
\frac{3n}{n+2}\leq p_{i}\leq 2,\text{ }i=1,2.  \tag{2.13}  \label{2.13}
\end{equation}

Then,

$B_{i},$ $i=1,2$ is\ trilinear,\ continuous\ on $V\left( \Omega _{i}\right)
\times V\left( \Omega _{i}\right) \times V\left( \Omega _{i}\right) \mathbf{.%
}$ Moreover, $\forall \left( \mathbf{v}_{1},\mathbf{v}_{2},\mathbf{v}%
_{3}\right) \in V\left( \Omega _{i}\right) \times V\left( \Omega _{i}\right)
\times V\left( \Omega _{i}\right) $ we have%
\begin{eqnarray}
B_{i}\left( \mathbf{v}_{1},\mathbf{v}_{2},\mathbf{v}_{3}\right) +B_{i}\left( 
\mathbf{v}_{1},\mathbf{v}_{3},\mathbf{v}_{2}\right) &=&  \notag \\
\left( -1\right) ^{i+1}\dint_{\Gamma _{0}}\left( \mathbf{v}_{1}\cdot \mathbf{%
n}\right) \left( \mathbf{v}_{2}\cdot \mathbf{v}_{3}\right) d\gamma _{0},%
\text{ }i &=&1,2.  \TCItag{2.14}  \label{2.14}
\end{eqnarray}%
where $d\gamma _{0}$\ represents the superficial measure on the boundary
part $\Gamma _{0}.$

\textbf{2.} The operator $\phi $ is hemi-continuous, strictly monotone,
bounded\ and coercive on $V.$
\end{lemma}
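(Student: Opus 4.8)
The plan is to prove the two parts separately, using Sobolev embeddings together with an integration-by-parts identity for the first, and the standard theory of monotone operators of $p$-Laplacian type for the second. Trilinearity of $B_i$ is immediate from (\ref{2.11}), since the integrand $v_{1k}\,\partial_k v_{2l}\,v_{3l}$ is linear in each of its three arguments. To prove continuity I would bound $B_i$ by H\"older's inequality, placing $\nabla\mathbf{v}_2$ in $L^{p_i}(\Omega_i)$ and each of $\mathbf{v}_1,\mathbf{v}_3$ in $L^{p_i^*}(\Omega_i)$, where $p_i^*=\frac{np_i}{n-p_i}$ is the Sobolev conjugate. The integrability condition $\frac{2}{p_i^*}+\frac{1}{p_i}\le 1$ reduces, after a short computation, to exactly $p_i\ge\frac{3n}{n+2}$, which is precisely why hypothesis (\ref{2.13}) is imposed. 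Combining this with the continuous embedding $W^{1,p_i}(\Omega_i)\hookrightarrow L^{p_i^*}(\Omega_i)$ then yields $|B_i(\mathbf{v}_1,\mathbf{v}_2,\mathbf{v}_3)|\le c\,\|\mathbf{v}_1\|_{V(\Omega_i)}\|\mathbf{v}_2\|_{V(\Omega_i)}\|\mathbf{v}_3\|_{V(\Omega_i)}$.

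For identity (\ref{2.14}) I would apply the divergence theorem to the vector field with components $v_{1k}\,v_{2l}\,v_{3l}$. Expanding $\partial_k(v_{1k}v_{2l}v_{3l})$ and discarding the term $(\func{div}\mathbf{v}_1)(\mathbf{v}_2\cdot\mathbf{v}_3)$, which vanishes since $\func{div}\mathbf{v}_1=0$, leaves exactly the two integrands of $B_i(\mathbf{v}_1,\mathbf{v}_2,\mathbf{v}_3)$ and $B_i(\mathbf{v}_1,\mathbf{v}_3,\mathbf{v}_2)$; hence their sum equals the boundary flux $\int_{\partial\Omega_i}(\mathbf{v}_1\cdot\mathbf{n})(\mathbf{v}_2\cdot\mathbf{v}_3)\,d\gamma$. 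Because each $\mathbf{v}_j$ vanishes on $\Gamma_i$, this flux reduces to an integral over $\Gamma_0$, and the factor $(-1)^{i+1}$ records that the prescribed normal $\mathbf{n}$ is the outward normal of $\Omega_1$ but the inward normal of $\Omega_2$. The H\"older/Sobolev bounds from the first step ensure each product lies in $L^1$, so the divergence theorem is legitimate on the $C^1$ domains.

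For the second part, $\phi$ is the symmetrized $p$-Laplacian built from the deformation operator $D$. Hemicontinuity follows from the continuity of $t\mapsto|D(\mathbf{u}+t\mathbf{v})|^{p_i-2}D(\mathbf{u}+t\mathbf{v})$ along the affine path in $V$ together with dominated convergence; boundedness follows from H\"older, giving $\|\phi(\mathbf{u}_1,\mathbf{u}_2)\|_{V'}\le c\big(\|\mathbf{u}_1\|_{V(\Omega_1)}^{p_1-1}+\|\mathbf{u}_2\|_{V(\Omega_2)}^{p_2-1}\big)$. Coercivity I would read off from $\langle\phi(\mathbf{u}_1,\mathbf{u}_2),(\mathbf{u}_1,\mathbf{u}_2)\rangle=\mu_1\|D(\mathbf{u}_1)\|_{L^{p_1}}^{p_1}+\mu_2\|D(\mathbf{u}_2)\|_{L^{p_2}}^{p_2}$ combined with Korn's and Poincar\'e's inequalities on $V(\Omega_i)$, available since $\mathbf{v}=0$ on $\Gamma_i$ with $meas(\Gamma_i)>0$, which bound the full $W^{1,p_i}$ norm by $\|D(\mathbf{u}_i)\|_{L^{p_i}}$. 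Strict monotonicity rests on the pointwise inequality $\big(|\xi|^{p-2}\xi-|\eta|^{p-2}\eta\big)\cdot(\xi-\eta)>0$ for $\xi\ne\eta$, valid for $1<p\le 2$; integrating it forces $D(\mathbf{u}_i)=D(\mathbf{v}_i)$ a.e.\ whenever the pairing vanishes, and Korn's inequality upgrades this to $\mathbf{u}_i=\mathbf{v}_i$.

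The main obstacle is the strict monotonicity in the degenerate range $1<p_i<2$: the pointwise monotonicity inequality is not uniformly convex as it is for $p\ge 2$, so deducing $D(\mathbf{u}_i)=D(\mathbf{v}_i)$ from the vanishing of the integrated inequality, and then invoking Korn, must be handled carefully. A secondary delicate point is the borderline exponent $p_i=\frac{3n}{n+2}$ in the first part, where the H\"older estimate only just closes and the continuity of $B_i$ is sharp.
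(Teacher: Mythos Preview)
Your proposal is correct and follows essentially the same route as the paper: H\"older's inequality plus Sobolev embeddings for the continuity of $B_i$, integration by parts with the divergence-free and boundary conditions for identity (\ref{2.14}), and standard $p$-Laplacian monotone-operator arguments together with Korn's inequality for the properties of $\phi$. The only cosmetic difference is that the paper packages hemicontinuity and monotonicity of $\phi$ by recognizing it as the G\^ateaux derivative of the convex functional $J_i(\sigma)=\frac{\mu_i}{p_i}\int_{\Omega_i}|\sigma|^{p_i}\,dx$, whereas you verify these properties directly via the pointwise inequality; both approaches are equivalent and your treatment of strict monotonicity is in fact cleaner than the paper's somewhat sketchy algebraic manipulation.
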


\begin{proof}
1.\ The proof of the continuity of $B_{i},$ $i=1,2$\ on $V\left( \Omega
_{i}\right) \times V\left( \Omega _{i}\right) \times V\left( \Omega
_{i}\right) $ is an immediate consequence of H\"{o}lder's inequality and
Sobolev embeddings, see \cite{r9}.

Concerning the equality (\ref{2.13})\ it is enough to use an integration by
parts and using the incompressibility condition (\ref{2.5}), (\ref{2.6}),
the boundary conditions (\ref{2.7}), (\ref{2.8}) and the transmission
condition (\ref{2.9}).

2. We can easily prove that the operator $\phi $ can be written%
\begin{gather*}
\left\langle \phi \left( \mathbf{u}_{1},\mathbf{u}_{2}\right) ,\left( 
\mathbf{v}_{1},\mathbf{v}_{2}\right) \right\rangle _{V^{\prime }\times
V}=\left\langle dJ_{1}\left( D\left( \mathbf{u}_{1}\right) \right) ,D\left( 
\mathbf{v}_{1}\right) \right\rangle _{L^{p_{1}^{\prime }}\left( \Omega
_{1}\right) _{s}^{n\times n}\times L^{p_{1}}\left( \Omega _{1}\right)
_{s}^{n\times n}}+ \\
\left\langle dJ_{2}\left( D\left( \mathbf{u}_{2}\right) \right) ,D\left( 
\mathbf{v}_{2}\right) \right\rangle _{L^{p_{2}^{\prime }}\left( \Omega
_{2}\right) _{s}^{n\times n}\times L^{p_{2}}\left( \Omega _{2}\right)
_{s}^{n\times n}},
\end{gather*}%
where the functional $J_{i},$ $i=1,2$ is defined by%
\begin{equation*}
J_{i}:L^{p_{i}}\left( \Omega _{i}\right) _{s}^{n\times n}\subset \mathbb{S}%
_{n}\longrightarrow 
\mathbb{R}
,\ \ \mathbf{\sigma \longmapsto }J_{i}\left( \sigma \right) =\dfrac{\mu _{i}%
}{p_{i}}\dint_{\Omega _{i}}\left\vert \sigma \right\vert ^{p_{i}}dx,\text{ }%
i=1,2,
\end{equation*}%
and $d$ represents the G\^{a}teaux derivate.

Furthermore, it easy to check that the functional $J_{i},$ $i=1,2$ is convex
and G\^{a}teaux differentiable on $L^{p_{i}}\left( \Omega _{i}\right)
_{s}^{n\times n}.$ Thus, $dJ_{i}$ is hemi-continuous and monotone. The
Gateaux derivate of $J_{i}$ at any point $\sigma \in L^{p_{i}}\left( \Omega
_{i}\right) _{s}^{n\times n}$ is given by%
\begin{gather*}
\left\langle dJ_{i}\left( \sigma \right) ,\tau \right\rangle
_{L^{p_{i}^{\prime }}\left( \Omega _{i}\right) _{s}^{n\times n}\times
L^{p_{i}}\left( \Omega _{i}\right) _{s}^{n\times n}}=\dint_{\Omega _{i}}\mu
_{i}\left\vert \sigma \right\vert ^{p_{i}-2}\sigma \cdot \tau dx\  \\
\forall \tau \in L^{p_{i}}\left( \Omega _{i}\right) _{s}^{n\times n},\text{ }%
i=1,2.
\end{gather*}

This leads after some algebraic manipulations, that for $i=1,2$ 
\begin{eqnarray*}
&&\left\langle dJ_{i}\left( \sigma _{1}\right) -dJ_{i}\left( \sigma
_{2}\right) ,\sigma _{1}-\sigma _{2}\right\rangle _{L^{p_{i}^{\prime
}}\left( \Omega _{i}\right) _{s}^{n\times n}\times L^{p_{i}}\left( \Omega
_{i}\right) _{s}^{n\times n}} \\
&\geq &\mu _{i}\dint_{\Omega _{i}}\left( \left\vert dJ_{i}\left( \sigma
_{1}\right) \right\vert -\left\vert dJ_{i}\left( \sigma _{2}\right)
\right\vert \right) \left( \left\vert \sigma _{1}\right\vert -\left\vert
\sigma _{2}\right\vert \right) dx.
\end{eqnarray*}

Then, if $\sigma _{1}\neq \sigma _{2}$ we find%
\begin{equation*}
\left\langle dJ_{i}\left( \sigma _{1}\right) -dJ_{i}\left( \sigma
_{2}\right) ,\sigma _{1}-\sigma _{2}\right\rangle _{L^{p_{i}^{\prime
}}\left( \Omega _{i}\right) _{s}^{n\times n}\times L^{p_{i}}\left( \Omega
_{i}\right) _{s}^{n\times n}}>0.
\end{equation*}

Which means that the functional $dJ_{i},$ $i=1,2$ is strictly monotone.

Consequently, operator $\phi $ is hemi-continuous and strictly monotone on $%
V.$

Moreover, we obtain from the definition of $\phi $%
\begin{gather*}
\left\vert \left\langle \phi \left( \mathbf{u}_{1},\mathbf{u}_{2}\right)
,\left( \mathbf{v}_{1},\mathbf{v}_{2}\right) \right\rangle _{V^{\prime
}\times V}\right\vert \leq \mu _{1}\left\Vert \mathbf{u}_{1}\right\Vert
_{V\left( \Omega _{1}\right) }^{\frac{p_{1}}{p_{1}^{\prime }}}\left\Vert 
\mathbf{v}_{1}\right\Vert _{V\left( \Omega _{1}\right) }+\mu _{2}\left\Vert 
\mathbf{u}_{2}\right\Vert _{V\left( \Omega _{2}\right) }^{\frac{p_{2}}{%
p_{2}^{\prime }}}\left\Vert \mathbf{v}_{2}\right\Vert _{V\left( \Omega
_{2}\right) } \\
\leq \left( \mu _{1}\left\Vert \mathbf{u}_{1}\right\Vert _{V\left( \Omega
_{1}\right) }^{\frac{p_{1}}{p_{1}^{\prime }}}+\mu _{2}\left\Vert \mathbf{u}%
_{2}\right\Vert _{V\left( \Omega _{2}\right) }^{\frac{p_{2}}{p_{2}^{\prime }}%
}\right) \left\Vert \left( \mathbf{v}_{1},\mathbf{v}_{2}\right) \right\Vert
_{V}\text{ \ }\forall \left( \mathbf{u}_{1},\mathbf{u}_{2}\right) ,\text{ }%
\left( \mathbf{u}_{1},\mathbf{u}_{2}\right) \in V.
\end{gather*}

Then,%
\begin{equation*}
\left\Vert \phi \left( \mathbf{u}_{1},\mathbf{u}_{2}\right) \right\Vert
_{V^{\prime }}\leq \mu _{1}\left\Vert \mathbf{u}_{1}\right\Vert _{V\left(
\Omega _{1}\right) }^{\frac{p_{1}}{p_{1}^{\prime }}}+\mu _{2}\left\Vert 
\mathbf{u}_{2}\right\Vert _{V\left( \Omega _{2}\right) }^{\frac{p_{2}}{%
p_{2}^{\prime }}}\text{ \ }\forall \left( \mathbf{u}_{1},\mathbf{u}%
_{2}\right) \in V.
\end{equation*}

Hence, $\phi $ is bounded on $V.$

Now, we find using the generalized Korn inequality%
\begin{equation*}
\left\langle \phi \left( \mathbf{u}_{1},\mathbf{u}_{2}\right) ,\left( 
\mathbf{u}_{1},\mathbf{u}_{2}\right) \right\rangle _{V^{\prime }\times
V}\geq c\left( \left\Vert \mathbf{u}_{1}\right\Vert _{V\left( \Omega
_{1}\right) }^{p_{1}}+\left\Vert \mathbf{u}_{2}\right\Vert _{V\left( \Omega
_{2}\right) }^{p_{2}}\right) .
\end{equation*}

It follows that%
\begin{equation*}
\frac{\left\langle \phi \left( \mathbf{u}_{1},\mathbf{u}_{2}\right) ,\left( 
\mathbf{u}_{1},\mathbf{u}_{2}\right) \right\rangle _{V^{\prime }\times V}}{%
\left\Vert \left( \mathbf{u}_{1},\mathbf{u}_{2}\right) \right\Vert _{V}}\geq
c\frac{\left\Vert \mathbf{u}_{1}\right\Vert _{V\left( \Omega _{1}\right)
}^{p_{1}}+\left\Vert \mathbf{u}_{2}\right\Vert _{V\left( \Omega _{2}\right)
}^{p_{2}}}{\left\Vert \mathbf{u}_{1}\right\Vert _{V\left( \Omega _{1}\right)
}+\left\Vert \mathbf{u}_{2}\right\Vert _{V\left( \Omega _{2}\right) }}.
\end{equation*}

By passage to the limit when $\left\Vert \left( \mathbf{u}_{1},\mathbf{u}%
_{2}\right) \right\Vert _{V}$ $\longrightarrow +\infty ,$ we find%
\begin{equation*}
\frac{\left\langle \phi \left( \mathbf{u}_{1},\mathbf{u}_{2}\right) ,\left( 
\mathbf{u}_{1},\mathbf{u}_{2}\right) \right\rangle _{V^{\prime }\times V}}{%
\left\Vert \left( \mathbf{u}_{1},\mathbf{u}_{2}\right) \right\Vert _{V}}\geq
c\underset{r\rightarrow +\infty ,\theta \in \left[ 0,\frac{\pi }{2}\right] }{%
\lim }\frac{r^{p_{1}}\cos ^{p_{1}}\theta +r^{p_{2}}\cos ^{p_{2}}\theta }{%
r\cos \theta +r\cos \theta }=+\infty .
\end{equation*}

This proves that the operator $\phi $ is coercive.

Which permits us de conclude the proof.
\end{proof}

\begin{remark}
In (\ref{2.14}), the right hand side has sense, since the injection%
\begin{equation*}
W^{1-\frac{1}{p_{i}},p_{i}}\left( \Gamma _{0}\right) ^{n}\longrightarrow L^{%
\frac{\left( n-1\right) p_{i}}{n-p_{i}}}\left( \Gamma _{0}\right) ^{n},\text{
}i=1,2
\end{equation*}%
is continuous. In particular the trace application%
\begin{equation*}
\gamma _{0}:W^{1,p_{i}}\left( \Omega _{i}\right) ^{n}\longrightarrow
L^{3}\left( \Gamma _{0}\right) ^{n},\text{ }i=1,2
\end{equation*}%
is continuous.
\end{remark}

From now on, we take $\dfrac{3n}{n+2}\leq p_{i}\leq 2,$ $i=1,2.$ The use of
Green's formula under the conditions (2.5)-(2.10) permits us to derive the
following variational formulation of the mechanical problem (P1).

\textit{Problem P}$_{\text{v}}\mathit{.}$\textit{\ }For prescribed data $%
\left( \mathbf{f}_{1},\mathbf{f}_{2}\right) \in V^{\prime }.$ Find $\left( 
\mathbf{u}_{1},\mathbf{u}_{2}\right) \in V$ satisfying the variational
inequality%
\begin{gather}
B_{1}\left( \mathbf{u}_{1},\mathbf{u}_{1},\mathbf{v}_{1}-\mathbf{u}%
_{1}\right) +B_{2}\left( \mathbf{u}_{2},\mathbf{u}_{2},\mathbf{v}_{2}-%
\mathbf{u}_{2}\right) +  \notag \\
\left\langle \phi \left( \mathbf{u}_{1},\mathbf{u}_{2}\right) ,\left( 
\mathbf{v}_{1}-\mathbf{u}_{1},\mathbf{v}_{2}-\mathbf{u}_{2}\right)
\right\rangle _{V^{\prime }\times V}  \notag \\
+g_{1}\dint_{\Omega _{1}}\left\vert D\left( \mathbf{v}_{1}\right)
\right\vert dx-g_{1}\dint_{\Omega _{1}}\left\vert D\left( \mathbf{u}%
_{1}\right) \right\vert dx+g_{2}\dint_{\Omega _{2}}\left\vert D\left( 
\mathbf{v}_{2}\right) \right\vert dx-g_{2}\dint_{\Omega _{2}}\left\vert
D\left( \mathbf{u}_{2}\right) \right\vert dx  \notag \\
\geq \dint_{\Omega _{1}}\mathbf{f}_{1}\cdot \left( \mathbf{v}_{1}-\mathbf{u}%
_{1}\right) dx+\dint_{\Omega _{2}}\mathbf{f}_{2}\cdot \left( \mathbf{v}_{2}-%
\mathbf{u}_{2}\right) dx\ \text{ \ }\forall \left( \mathbf{v}_{1},\mathbf{v}%
_{2}\right) \in V.  \tag{2.15}  \label{2.15}
\end{gather}

\section{Main Result}

In this section we establish an existence result to the problems (\textit{P}$%
_{\text{v}}$).

\begin{theorem}
The problem\ (\textit{P}$_{\text{v}}$)\ admits a solution\ $\left( \mathbf{u}%
_{1},\mathbf{u}_{2}\right) \in V.$
\end{theorem}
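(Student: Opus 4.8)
The plan is to recast Problem ($P_{\text{v}}$) as an abstract variational inequality of the second kind governed by a pseudo-monotone operator, and then to invoke the classical existence theorem for such inequalities (in the spirit of Br\'{e}zis and Lions). First I would introduce the load $L\in V^{\prime}$ defined by $\left\langle L,\left( \mathbf{v}_{1},\mathbf{v}_{2}\right) \right\rangle =\dint_{\Omega _{1}}\mathbf{f}_{1}\cdot \mathbf{v}_{1}dx+\dint_{\Omega _{2}}\mathbf{f}_{2}\cdot \mathbf{v}_{2}dx$, the convex functional $j\left( \mathbf{v}_{1},\mathbf{v}_{2}\right) =g_{1}\dint_{\Omega _{1}}\left\vert D\left( \mathbf{v}_{1}\right) \right\vert dx+g_{2}\dint_{\Omega _{2}}\left\vert D\left( \mathbf{v}_{2}\right) \right\vert dx$, and the operator $A:V\longrightarrow V^{\prime }$ given by $\left\langle A\left( \mathbf{u}_{1},\mathbf{u}_{2}\right) ,\left( \mathbf{v}_{1},\mathbf{v}_{2}\right) \right\rangle =B_{1}\left( \mathbf{u}_{1},\mathbf{u}_{1},\mathbf{v}_{1}\right) +B_{2}\left( \mathbf{u}_{2},\mathbf{u}_{2},\mathbf{v}_{2}\right) +\left\langle \phi \left( \mathbf{u}_{1},\mathbf{u}_{2}\right) ,\left( \mathbf{v}_{1},\mathbf{v}_{2}\right) \right\rangle$. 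With this notation ($P_{\text{v}}$) reads: find $\mathbf{u}\in V$ with $\left\langle A\mathbf{u},\mathbf{v}-\mathbf{u}\right\rangle +j\left( \mathbf{v}\right) -j\left( \mathbf{u}\right) \geq \left\langle L,\mathbf{v}-\mathbf{u}\right\rangle$ for all $\mathbf{v}\in V$.

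Next I would verify the hypotheses of the abstract theorem. The functional $j$ is plainly proper, convex and, being finite and continuous for the $V$-norm (since $D\left( \mathbf{v}_{i}\right) \in L^{p_{i}}\hookrightarrow L^{1}$), lower semicontinuous. From Lemma 1.2 the viscous operator $\phi $ is bounded, hemi-continuous, monotone and coercive, hence pseudo-monotone. For the convective part $N\left( \mathbf{u}\right) =\left( B_{1}\left( \mathbf{u}_{1},\mathbf{u}_{1},\cdot \right) ,B_{2}\left( \mathbf{u}_{2},\mathbf{u}_{2},\cdot \right) \right)$, the growth condition (\ref{2.13}) is exactly the threshold $p_{i}\geq \frac{3n}{n+2}$ that makes $B_{i}$ continuous; combined with the compact Sobolev embeddings $W^{1,p_{i}}\left( \Omega _{i}\right) \hookrightarrow \hookrightarrow L^{q}\left( \Omega _{i}\right)$ for $q<p_{i}^{\ast }$, it shows that $N$ is strongly (completely) continuous, i.e. $\mathbf{u}^{k}\rightharpoonup \mathbf{u}$ in $V$ implies $N\left( \mathbf{u}^{k}\right) \to N\left( \mathbf{u}\right)$ in $V^{\prime }$. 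A strongly continuous operator is pseudo-monotone, and the sum of pseudo-monotone operators is pseudo-monotone, so $A=N+\phi $ is bounded and pseudo-monotone on the reflexive space $V$.

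The decisive point is coercivity of $A$, where the non-monotone term $N$ must not spoil the estimate coming from $\phi $. Here I would test $N$ against the solution direction and use the algebraic identity (\ref{2.14}): setting $\mathbf{v}_{2}=\mathbf{v}_{3}=\mathbf{u}_{i}$ gives $B_{i}\left( \mathbf{u}_{i},\mathbf{u}_{i},\mathbf{u}_{i}\right) =\frac{\left( -1\right) ^{i+1}}{2}\dint_{\Gamma _{0}}\left( \mathbf{u}_{i}\cdot \mathbf{n}\right) \left\vert \mathbf{u}_{i}\right\vert ^{2}d\gamma _{0}$. Since $\left( \mathbf{u}_{1},\mathbf{u}_{2}\right) \in V$ satisfies the interface matching $\mathbf{u}_{1}=\mathbf{u}_{2}$ on $\Gamma _{0}$, the two boundary integrals cancel, so $B_{1}\left( \mathbf{u}_{1},\mathbf{u}_{1},\mathbf{u}_{1}\right) +B_{2}\left( \mathbf{u}_{2},\mathbf{u}_{2},\mathbf{u}_{2}\right) =0$ and therefore $\left\langle A\mathbf{u},\mathbf{u}\right\rangle =\left\langle \phi \mathbf{u},\mathbf{u}\right\rangle$. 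Invoking the coercivity of $\phi $ already established in Lemma 1.2, one gets $\left\langle A\mathbf{u},\mathbf{u}\right\rangle /\left\Vert \mathbf{u}\right\Vert _{V}\to +\infty$ as $\left\Vert \mathbf{u}\right\Vert _{V}\to +\infty$, so $A$ is coercive (the bounded functional $j$ does not affect this). With $A$ bounded, pseudo-monotone and coercive and $j$ convex, proper and lower semicontinuous, the existence theorem for variational inequalities of the second kind yields a solution $\mathbf{u}\in V$ of ($P_{\text{v}}$).

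I expect the main obstacle to be the convective operator: establishing its complete continuity at the borderline exponent $p_{i}=\frac{3n}{n+2}$ (which is what delivers pseudo-monotonicity of $A$) and confirming that the interface condition forces the cancellation in (\ref{2.14}) so that coercivity is preserved. The remaining verifications, namely the continuity and convexity of $j$ and the reassembly of the abstract inequality into ($P_{\text{v}}$), are routine once these two facts are secured. An alternative, should the borderline compactness prove delicate, would be to regularize the yield terms $g_{i}\left\vert D\left( \mathbf{v}_{i}\right) \right\vert$ by a smooth approximation, solve the resulting equation by the Br\'{e}zis pseudo-monotone theory, derive uniform a priori bounds from the same coercivity estimate, and pass to the limit.
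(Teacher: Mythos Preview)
Your argument is correct, but it follows a genuinely different route from the paper's. The paper does not invoke pseudo-monotone operator theory at all. Instead it proceeds in two steps: first it \emph{freezes} the convection by fixing $(\mathbf{w}_{1},\mathbf{w}_{2})\in V$ and solving the linearized auxiliary inequality in which $B_{i}(\mathbf{w}_{i},\mathbf{u}_{i},\cdot)$ replaces $B_{i}(\mathbf{u}_{i},\mathbf{u}_{i},\cdot)$; the resulting operator $\phi_{(\mathbf{w}_{1},\mathbf{w}_{2})}=\phi+B_{1}(\mathbf{w}_{1},\cdot,\cdot)+B_{2}(\mathbf{w}_{2},\cdot,\cdot)$ is shown, via the same interface cancellation you exploit, to be hemi-continuous, strictly monotone, bounded and coercive, so the classical Br\'{e}zis theory for monotone operators gives a unique solution together with a uniform bound $\|(\mathbf{u}_{1},\mathbf{u}_{2})\|_{V}\leq R$ independent of $(\mathbf{w}_{1},\mathbf{w}_{2})$. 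Second, it applies Schauder's fixed point theorem to the solution map $(\mathbf{w}_{1},\mathbf{w}_{2})\mapsto(\mathbf{u}_{1},\mathbf{u}_{2})$ on the ball $K=\{\|\cdot\|_{V}\leq R\}$, proving continuity in the $L^{3n/(n-1)}\times L^{3n/(n-1)}$ topology (where $K$ is compact by Rellich) through a quantitative estimate based on the inequality $(\left\vert x\right\vert^{p-2}x-\left\vert y\right\vert^{p-2}y)\cdot(x-y)\geq c\left\vert x-y\right\vert^{2}/(\left\vert x\right\vert+\left\vert y\right\vert)^{2-p}$.

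What each approach buys: your pseudo-monotone route is more compact and conceptually clean, handling the full nonlinear operator $A=N+\phi$ in one shot; the price is the complete continuity of $N$, which, as you yourself flag, is delicate precisely at the borderline $p_{i}=\tfrac{3n}{n+2}$ since the Sobolev embedding into $L^{p_{i}^{\ast}}$ is then non-compact. The paper's linearization-plus-Schauder scheme sidesteps pseudo-monotonicity entirely and works only with genuinely monotone operators at each stage; compactness enters only through Rellich on the ball $K$, and the continuity of the fixed-point map is obtained from an explicit Lipschitz-type estimate rather than from abstract complete continuity. Both proofs rest on the same structural observation you identified, namely that the interface matching $\mathbf{u}_{1}=\mathbf{u}_{2}$ on $\Gamma_{0}$ forces $B_{1}(\mathbf{u}_{1},\mathbf{u}_{1},\mathbf{u}_{1})+B_{2}(\mathbf{u}_{2},\mathbf{u}_{2},\mathbf{u}_{2})=0$, which is what delivers the a~priori bound in either framework.
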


The proof will be done in two steps.

\textbf{First step.} Take an arbitrary element $\left( \mathbf{w}_{1},%
\mathbf{w}_{2}\right) \in V$ and consider the auxiliary problem.

\textit{Problem P}$_{\left( \mathbf{w}_{1},\mathbf{w}_{2}\right) }.$ Find $%
\left( \mathbf{u}_{1},\mathbf{u}_{2}\right) =\left( \mathbf{u}_{1}\left( 
\mathbf{w}_{1},\mathbf{w}_{2}\right) ,\mathbf{u}_{2}\left( \mathbf{w}_{1},%
\mathbf{w}_{2}\right) \right) \in V$ solution of the variational inequality%
\begin{gather}
B_{1}\left( \mathbf{w}_{1},\mathbf{u}_{1},\mathbf{v}_{1}-\mathbf{u}%
_{1}\right) +B_{2}\left( \mathbf{w}_{2},\mathbf{u}_{2},\mathbf{v}_{2}-%
\mathbf{u}_{2}\right) +  \notag \\
\left\langle \phi \left( \mathbf{u}_{1},\mathbf{u}_{2}\right) ,\left( 
\mathbf{v}_{1}-\mathbf{u}_{1},\mathbf{v}_{2}-\mathbf{u}_{2}\right)
\right\rangle _{V^{\prime }\times V}  \notag \\
+g_{1}\dint_{\Omega _{1}}\left\vert D\left( \mathbf{v}_{1}\right)
\right\vert dx-g_{1}\dint_{\Omega _{1}}\left\vert D\left( \mathbf{u}%
_{1}\right) \right\vert dx+g_{2}\dint_{\Omega _{2}}\left\vert D\left( 
\mathbf{v}_{2}\right) \right\vert dx-g_{2}\dint_{\Omega _{2}}\left\vert
D\left( \mathbf{u}_{2}\right) \right\vert dx  \notag \\
\geq \dint_{\Omega _{1}}\mathbf{f}_{1}\cdot \left( \mathbf{v}_{1}-\mathbf{u}%
_{1}\right) dx+\dint_{\Omega _{2}}\mathbf{f}_{2}\cdot \left( \mathbf{v}_{2}-%
\mathbf{u}_{2}\right) dx\ \text{ \ }\forall \left( \mathbf{v}_{1},\mathbf{v}%
_{2}\right) \in V.  \tag{3.1}  \label{3.1}
\end{gather}%
\noindent and it satisfies the estimate%
\begin{equation}
\left\Vert \left( \mathbf{u}_{1},\mathbf{u}_{2}\right) \right\Vert _{V}\leq
R.  \tag{3.2}  \label{3.2}
\end{equation}

\begin{lemma}
The problem (\textit{P}$_{\left( \mathbf{w}_{1},\mathbf{w}_{2}\right) }$)
has a unique solution%
\begin{equation*}
\left( \mathbf{u}_{1},\mathbf{u}_{2}\right) =\left( \mathbf{u}_{1}\left( 
\mathbf{w}_{1},\mathbf{w}_{2}\right) ,\mathbf{u}_{2}\left( \mathbf{w}_{1},%
\mathbf{w}_{2}\right) \right) \in V.
\end{equation*}
\end{lemma}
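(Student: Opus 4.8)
The plan is to recast the variational inequality (\ref{3.1}) as an abstract inequality of the second kind governed by a monotone operator, and then invoke the classical existence and uniqueness theory for such inequalities. Concretely, for the frozen pair $\left( \mathbf{w}_{1},\mathbf{w}_{2}\right) \in V$ I would introduce the operator $A:V\longrightarrow V^{\prime }$ defined by
\begin{equation*}
\left\langle A\left( \mathbf{u}_{1},\mathbf{u}_{2}\right) ,\left( \mathbf{v}_{1},\mathbf{v}_{2}\right) \right\rangle _{V^{\prime }\times V}=B_{1}\left( \mathbf{w}_{1},\mathbf{u}_{1},\mathbf{v}_{1}\right) +B_{2}\left( \mathbf{w}_{2},\mathbf{u}_{2},\mathbf{v}_{2}\right) +\left\langle \phi \left( \mathbf{u}_{1},\mathbf{u}_{2}\right) ,\left( \mathbf{v}_{1},\mathbf{v}_{2}\right) \right\rangle _{V^{\prime }\times V},
\end{equation*}
together with the functional $j\left( \mathbf{v}_{1},\mathbf{v}_{2}\right) =g_{1}\dint_{\Omega _{1}}\left\vert D\left( \mathbf{v}_{1}\right) \right\vert dx+g_{2}\dint_{\Omega _{2}}\left\vert D\left( \mathbf{v}_{2}\right) \right\vert dx$ and the linear form $L=\left( \mathbf{f}_{1},\mathbf{f}_{2}\right) \in V^{\prime }$. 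Problem $P_{\left( \mathbf{w}_{1},\mathbf{w}_{2}\right) }$ is then equivalent to finding $\left( \mathbf{u}_{1},\mathbf{u}_{2}\right) \in V$ with $\left\langle A\left( \mathbf{u}_{1},\mathbf{u}_{2}\right) ,\left( \mathbf{v}_{1}-\mathbf{u}_{1},\mathbf{v}_{2}-\mathbf{u}_{2}\right) \right\rangle +j\left( \mathbf{v}_{1},\mathbf{v}_{2}\right) -j\left( \mathbf{u}_{1},\mathbf{u}_{2}\right) \geq \left\langle L,\left( \mathbf{v}_{1}-\mathbf{u}_{1},\mathbf{v}_{2}-\mathbf{u}_{2}\right) \right\rangle$ for all $\left( \mathbf{v}_{1},\mathbf{v}_{2}\right) \in V$.

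Next I would verify the structural hypotheses on $A$ and $j$. Since $\mathbf{w}_{i}$ is frozen, each $B_{i}\left( \mathbf{w}_{i},\cdot ,\cdot \right) $ is linear and continuous by Lemma 1.1, so $A$ inherits boundedness and hemicontinuity directly from the corresponding properties of $\phi $ established in Lemma 1.2. The decisive observation is that the convective part disappears on the diagonal: applying (\ref{2.14}) with $\mathbf{v}_{2}=\mathbf{v}_{3}=\mathbf{h}_{i}$ and $\mathbf{v}_{1}=\mathbf{w}_{i}$ gives $2B_{i}\left( \mathbf{w}_{i},\mathbf{h}_{i},\mathbf{h}_{i}\right) =\left( -1\right) ^{i+1}\dint_{\Gamma _{0}}\left( \mathbf{w}_{i}\cdot \mathbf{n}\right) \left\vert \mathbf{h}_{i}\right\vert ^{2}d\gamma _{0}$; because any $\left( \mathbf{h}_{1},\mathbf{h}_{2}\right) \in V$ satisfies $\mathbf{h}_{1}=\mathbf{h}_{2}$ on $\Gamma _{0}$ and likewise $\mathbf{w}_{1}=\mathbf{w}_{2}$ on $\Gamma _{0}$, the two boundary contributions cancel, whence $B_{1}\left( \mathbf{w}_{1},\mathbf{h}_{1},\mathbf{h}_{1}\right) +B_{2}\left( \mathbf{w}_{2},\mathbf{h}_{2},\mathbf{h}_{2}\right) =0$. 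Consequently $\left\langle A\mathbf{u},\mathbf{u}\right\rangle =\left\langle \phi \mathbf{u},\mathbf{u}\right\rangle $ and, applying the same identity to the difference of two elements (using linearity of $B_{i}$ in its second slot), the convective term contributes nothing to the monotonicity inequality. Hence $A$ is coercive and strictly monotone, both inherited from $\phi $. The functional $j$ is nonnegative, convex, continuous (hence lower semicontinuous) and proper, with $j\left( 0\right) =0$.

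With these properties secured, existence of a solution follows from the standard theorem on variational inequalities of the second kind driven by a bounded, hemicontinuous, coercive, monotone operator on a reflexive Banach space together with a proper convex lower semicontinuous functional; uniqueness follows in the usual way by testing the inequality for one solution against the other, adding the two inequalities, and invoking the strict monotonicity of $A$. Finally, the estimate (\ref{3.2}) is obtained by taking $\mathbf{v}=0$: the convective terms again vanish on the diagonal, $j\left( 0\right) =0$ and $j\left( \mathbf{u}\right) \geq 0$, which leaves $\left\langle \phi \mathbf{u},\mathbf{u}\right\rangle \leq \left\langle L,\mathbf{u}\right\rangle \leq \left\Vert L\right\Vert _{V^{\prime }}\left\Vert \mathbf{u}\right\Vert _{V}$; the coercivity bound of Lemma 1.2 then yields $\left\Vert \left( \mathbf{u}_{1},\mathbf{u}_{2}\right) \right\Vert _{V}\leq R$ with $R$ depending only on $\left\Vert \left( \mathbf{f}_{1},\mathbf{f}_{2}\right) \right\Vert _{V^{\prime }}$ and the data, crucially independent of $\left( \mathbf{w}_{1},\mathbf{w}_{2}\right) $ --- this uniformity being precisely what the fixed-point argument of the second step will require. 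The main obstacle is the coercivity and strict-monotonicity verification, i.e. establishing the interface cancellation of the convective forms through (\ref{2.14}) and the matching traces on $\Gamma _{0}$; once that is in place the remainder is a routine application of monotone variational-inequality theory.
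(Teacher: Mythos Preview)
Your proposal is correct and follows essentially the same route as the paper: you introduce the same auxiliary operator (the paper calls it $\phi_{(\mathbf{w}_{1},\mathbf{w}_{2})}$), use the interface identity (\ref{2.14}) together with the matching traces on $\Gamma_{0}$ to kill the convective contribution on the diagonal, thereby inheriting strict monotonicity and coercivity from $\phi$, and then invoke the classical monotone--operator theory for variational inequalities of the second kind with the same $j$. The derivation of the uniform bound (\ref{3.2}) via the test function $(\mathbf{v}_{1},\mathbf{v}_{2})=(0,0)$ is likewise identical to the paper's argument.
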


\begin{proof}
Let us introduce the operator%
\begin{gather}
\phi _{\left( \mathbf{w}_{1},\mathbf{w}_{2}\right) }:V\longrightarrow
V^{\prime },\text{ }\left( \mathbf{u}_{1},\mathbf{u}_{2}\right) \longmapsto
\phi _{\left( \mathbf{w}_{1},\mathbf{w}_{2}\right) }\left( \mathbf{u}_{1},%
\mathbf{u}_{2}\right) :\forall \left( \mathbf{v}_{1},\mathbf{v}_{2}\right)
\in V  \notag \\
\left\langle \phi _{\left( \mathbf{w}_{1},\mathbf{w}_{2}\right) }\left( 
\mathbf{u}_{1},\mathbf{u}_{2}\right) ,\left( \mathbf{v}_{1},\mathbf{v}%
_{2}\right) \right\rangle _{V^{\prime }\times V}=B_{1}\left( \mathbf{w}_{1},%
\mathbf{u}_{1},\mathbf{v}_{1}\right) +B_{2}\left( \mathbf{w}_{2},\mathbf{u}%
_{2},\mathbf{v}_{2}\right) +  \notag \\
\left\langle \phi \left( \mathbf{u}_{1},\mathbf{u}_{2}\right) ,\left( 
\mathbf{v}_{1},\mathbf{v}_{2}\right) \right\rangle _{V^{\prime }\times V}. 
\tag{3.3}  \label{3.3}
\end{gather}

First, we get using lemma 2.1%
\begin{gather*}
B_{1}\left( \mathbf{w}_{1},\mathbf{u}_{1},\mathbf{u}_{1}\right) +B_{2}\left( 
\mathbf{w}_{2},\mathbf{u}_{2},\mathbf{u}_{2}\right) =\dfrac{1}{2}%
\dint_{\Gamma _{0}}\left\vert \mathbf{u}_{1}\right\vert ^{2}\left( \mathbf{w}%
_{1}\cdot \mathbf{n}\right) d\gamma _{0}- \\
\dfrac{1}{2}\dint_{\Gamma _{0}}\left\vert \mathbf{u}_{2}\right\vert
^{2}\left( \mathbf{w}_{2}\cdot \mathbf{n}\right) d\gamma _{0}\ \ \forall
\left( \mathbf{u}_{1},\mathbf{u}_{2}\right) \in V.
\end{gather*}

The fact that $\left( \mathbf{w}_{1},\mathbf{w}_{2}\right) ,$ $\left( 
\mathbf{u}_{1},\mathbf{u}_{2}\right) \in V$ implies that $\mathbf{w}_{1}-%
\mathbf{w}_{2}=0$ and $\mathbf{u}_{1}-\mathbf{u}_{2}=0$ on $\Gamma _{0}.$
Thus, 
\begin{eqnarray}
\left\langle \phi _{\left( \mathbf{w}_{1},\mathbf{w}_{2}\right) }\left( 
\mathbf{u}_{1},\mathbf{u}_{2}\right) ,\left( \mathbf{u}_{1},\mathbf{u}%
_{2}\right) \right\rangle _{V^{\prime }\times V} &=&\left\langle \phi \left( 
\mathbf{u}_{1},\mathbf{u}_{2}\right) ,\left( \mathbf{u}_{1},\mathbf{u}%
_{2}\right) \right\rangle _{V^{\prime }\times V}\text{ }  \notag \\
\forall \left( \mathbf{u}_{1},\mathbf{u}_{2}\right) &\in &V.  \TCItag{3.4}
\label{3.4}
\end{eqnarray}

Furthermore, we find for every $\left( \mathbf{u}_{1},\mathbf{u}_{2}\right)
, $ $\left( \mathbf{v}_{1},\mathbf{v}_{2}\right) \in V.$ 
\begin{gather*}
\left\langle \phi _{\left( \mathbf{w}_{1},\mathbf{w}_{2}\right) }\left( 
\mathbf{v}_{1},\mathbf{v}_{2}\right) -\phi _{\left( \mathbf{w}_{1},\mathbf{w}%
_{2}\right) }\left( \mathbf{u}_{1},\mathbf{u}_{2}\right) ,\left( \mathbf{v}%
_{1}-\mathbf{u}_{1},\mathbf{v}_{2}-\mathbf{u}_{2}\right) \right\rangle
_{V^{\prime }\times V} \\
=B_{1}\left( \mathbf{w}_{1},\mathbf{u}_{1},\mathbf{v}_{1}-\mathbf{u}%
_{1}\right) +B_{2}\left( \mathbf{w}_{2},\mathbf{u}_{2},\mathbf{v}_{2}-%
\mathbf{u}_{2}\right) -B_{1}\left( \mathbf{w}_{1},\mathbf{v}_{1},\mathbf{v}%
_{1}-\mathbf{u}_{1}\right) - \\
B_{2}\left( \mathbf{w}_{2},\mathbf{v}_{2},\mathbf{v}_{2}-\mathbf{u}%
_{2}\right) +\left\langle \phi \left( \mathbf{v}_{1},\mathbf{v}_{2}\right)
-\phi \left( \mathbf{u}_{1},\mathbf{u}_{2}\right) ,\left( \mathbf{v}_{1},%
\mathbf{v}_{2}\right) -\left( \mathbf{u}_{1},\mathbf{u}_{2}\right)
\right\rangle _{V^{\prime }\times V} \\
\forall \left( \mathbf{v}_{1},\mathbf{v}_{2}\right) ,\text{ }\left( \mathbf{u%
}_{1},\mathbf{u}_{2}\right) \in V.
\end{gather*}

This gives, keeping in mind lemma 2.1%
\begin{gather}
\left\langle \phi _{\left( \mathbf{w}_{1},\mathbf{w}_{2}\right) }\left( 
\mathbf{v}_{1},\mathbf{v}_{2}\right) -\phi _{\left( \mathbf{w}_{1},\mathbf{w}%
_{2}\right) }\left( \mathbf{u}_{1},\mathbf{u}_{2}\right) ,\left( \mathbf{v}%
_{1}-\mathbf{u}_{1},\mathbf{v}_{2}-\mathbf{u}_{2}\right) \right\rangle
_{V^{\prime }\times V}  \notag \\
=\dfrac{1}{2}\dint_{\Gamma _{0}}\left\vert \mathbf{u}_{1}-\mathbf{v}%
_{1}\right\vert ^{2}\left( \mathbf{w}_{2}\cdot \mathbf{n}\right) d\gamma
_{0}-\dfrac{1}{2}\dint_{\Gamma _{0}}\left\vert \mathbf{u}_{2}-\mathbf{v}%
_{2}\right\vert ^{2}\left( \mathbf{w}_{1}\cdot \mathbf{n}\right) d\gamma _{0}
\notag \\
+\left\langle \phi \left( \mathbf{v}_{1},\mathbf{v}_{2}\right) -\phi \left( 
\mathbf{u}_{1},\mathbf{u}_{2}\right) ,\left( \mathbf{v}_{1}-\mathbf{u}_{1},%
\mathbf{v}_{2}-\mathbf{u}_{2}\right) \right\rangle _{V^{\prime }\times V} 
\notag \\
=\left\langle \phi \left( \mathbf{v}_{1},\mathbf{v}_{2}\right) -\phi \left( 
\mathbf{u}_{1},\mathbf{u}_{2}\right) ,\left( \mathbf{v}_{1}-\mathbf{u}_{1},%
\mathbf{v}_{2}-\mathbf{u}_{2}\right) \right\rangle _{V^{\prime }\times V}%
\text{ }  \notag \\
\forall \left( \mathbf{v}_{1},\mathbf{v}_{2}\right) ,\text{ }\left( \mathbf{u%
}_{1},\mathbf{u}_{2}\right) \in V.  \tag{3.5}  \label{3.5}
\end{gather}

Consequently, lemma 2.1 leads making use (\ref{3.4}) and (\ref{3.5}) that
the operator is $\phi _{\left( \mathbf{w}_{1},\mathbf{w}_{2}\right) }$ is
hemi-continuous, strictly monotone, bounded\ and coercive on $V$ for every $%
\left( \mathbf{w}_{1},\mathbf{w}_{2}\right) \in V.$

Consider now the following functional%
\begin{gather}
j:V\longrightarrow 
\mathbb{R}
,  \notag \\
j\left( \mathbf{v}_{1},\mathbf{v}_{2}\right) =g_{1}\dint_{\Omega
_{1}}\left\vert D\left( \mathbf{v}_{1}\right) \right\vert
dx+g_{2}\dint_{\Omega _{2}}\left\vert D\left( \mathbf{v}_{2}\right)
\right\vert dx  \tag{3.6}  \label{3.6}
\end{gather}

We can easily verify that the functional $j$ is proper, convex and lower
semi-continuous on $V.$

The inequality (\ref{3.1}) can be rewritten using the operator $\phi
_{\left( \mathbf{w}_{1},\mathbf{w}_{2}\right) }$ and the functional $j$ as
follows%
\begin{gather}
\left\langle \phi _{\left( \mathbf{w}_{1},\mathbf{w}_{2}\right) }\left( 
\mathbf{u}_{1},\mathbf{u}_{2}\right) ,\left( \mathbf{v}_{1}-\mathbf{u}_{1},%
\mathbf{v}_{2}-\mathbf{u}_{2}\right) \right\rangle _{V^{\prime }\times
V}+j\left( \mathbf{v}_{1},\mathbf{v}_{2}\right) -j\left( \mathbf{u}_{1},%
\mathbf{u}_{2}\right)  \notag \\
\geq \dint_{\Omega _{1}}\mathbf{f}_{1}\cdot \left( \mathbf{v}_{1}-\mathbf{u}%
_{1}\right) dx+\dint_{\Omega _{2}}\mathbf{f}_{2}\cdot \left( \mathbf{v}_{2}-%
\mathbf{u}_{2}\right) dx\ \ \forall \left( \mathbf{v}_{1},\mathbf{v}%
_{2}\right) \in V.  \tag{3.7}  \label{3.7}
\end{gather}

Consequently, the existence and uniqueness results from classical theories
for inequalities with monotone operators and convex functionals, see \cite%
{r1}.

Furthermore the estimate (\ref{3.2}) can be easily deduced by setting $%
\left( \mathbf{v}_{1},\mathbf{v}_{2}\right) =\left( 0,0\right) $ as test
function in inequality (\ref{3.1}), using lemma 2.1, Korn's inequality and
some algebraic manipulations.
\end{proof}

\textbf{Second step. }In order to obtain the solution of problem (\textit{P}$%
_{\text{v}}$) from that of problem \textit{P}$_{\left( \mathbf{w}_{1},%
\mathbf{w}_{2}\right) },$ we use the Schauder fixed point theorem, see \cite%
{r5}. To this aim we introduce the ball%
\begin{equation}
K=\left\{ \left( \mathbf{w}_{1},\mathbf{w}_{2}\right) \in V:\left\Vert
\left( \mathbf{w}_{1},\mathbf{w}_{2}\right) \right\Vert _{V}\leq R\right\} ,
\tag{3.8}  \label{3.8}
\end{equation}%
where $R$ is the constant given by the estimate (\ref{3.2}). The ball $K$ is
convex and from the Rellich compactness theorem the ball is compact in $L^{%
\frac{3n}{n-1}}\left( \Omega _{1}\right) ^{n}\times L^{\frac{3n}{n-1}}\left(
\Omega _{2}\right) ^{n}.$ Let us built the mapping $\mathbf{\tciLaplace }%
:K\longrightarrow K,$ as follows%
\begin{equation*}
\left( \mathbf{w}_{1},\mathbf{w}_{2}\right) \longmapsto \mathbf{\tciLaplace }%
\left( \mathbf{w}_{1},\mathbf{w}_{2}\right) =\left( \mathbf{u}_{1},\mathbf{u}%
_{2}\right) \mathbf{.}
\end{equation*}

To conclude the proof it is enough to verify the continuity of the mapping $%
\mathbf{\tciLaplace }$ when the ball $K$ is provided by the topology of
space $L^{\frac{3n}{n-1}}\left( \Omega _{1}\right) ^{n}\times L^{\frac{3n}{%
n-1}}\left( \Omega _{2}\right) ^{n}.$ To do this, we consider $\left( 
\mathbf{w}_{1},\mathbf{w}_{2}\right) ,$ $\left( \mathbf{w}_{1}^{\prime },%
\mathbf{w}_{2}^{\prime }\right) \in K$\ and denoting by $\left( \mathbf{u}%
_{1},\mathbf{u}_{2}\right) ,$ $\left( \mathbf{u}_{1}^{\prime },\mathbf{u}%
_{2}^{\prime }\right) \in K$ the elements $\left( \mathbf{u}_{1},\mathbf{u}%
_{2}\right) =\mathbf{\tciLaplace }\left( \mathbf{w}_{1},\mathbf{w}%
_{2}\right) \ $and $\left( \mathbf{u}_{1}^{\prime },\mathbf{u}_{2}^{\prime
}\right) =\mathbf{\tciLaplace }\left( \mathbf{w}_{1}^{\prime },\mathbf{w}%
_{2}^{\prime }\right) .$

Remembering that $\left( \mathbf{u}_{1},\mathbf{u}_{2}\right) $ and $\left( 
\mathbf{u}_{1}^{\prime },\mathbf{u}_{2}^{\prime }\right) $ are the solution
of the problems below%
\begin{gather}
B_{1}\left( \mathbf{w}_{1},\mathbf{u}_{1},\mathbf{v}_{1}-\mathbf{u}%
_{1}\right) +B_{2}\left( \mathbf{w}_{2},\mathbf{u}_{2},\mathbf{v}_{2}-%
\mathbf{u}_{2}\right) +  \notag \\
\left\langle \phi \left( \mathbf{u}_{1},\mathbf{u}_{2}\right) ,\left( 
\mathbf{v}_{1}-\mathbf{u}_{1},\mathbf{v}_{2}-\mathbf{u}_{2}\right)
\right\rangle _{V^{\prime }\times V}  \notag \\
+g_{1}\dint_{\Omega _{1}}\left\vert D\left( \mathbf{v}_{1}\right)
\right\vert dx-g_{1}\dint_{\Omega _{1}}\left\vert D\left( \mathbf{u}%
_{1}\right) \right\vert dx+g_{2}\dint_{\Omega _{2}}\left\vert D\left( 
\mathbf{v}_{2}\right) \right\vert dx-g_{2}\dint_{\Omega _{2}}\left\vert
D\left( \mathbf{u}_{2}\right) \right\vert dx  \notag \\
\geq \dint_{\Omega _{1}}\mathbf{f}_{1}\cdot \left( \mathbf{v}_{1}-\mathbf{u}%
_{1}\right) dx+\dint_{\Omega _{2}}\mathbf{f}_{2}\cdot \left( \mathbf{v}_{2}-%
\mathbf{u}_{2}\right) dx\ \text{ \ }\forall \left( \mathbf{v}_{1},\mathbf{v}%
_{2}\right) \in V,  \tag{3.9}  \label{3.9}
\end{gather}%
and%
\begin{gather}
B_{1}\left( \mathbf{w}_{1}^{\prime },\mathbf{u}_{1}^{\prime },\mathbf{v}_{1}-%
\mathbf{u}_{1}^{\prime }\right) +B_{2}\left( \mathbf{w}_{2}^{\prime },%
\mathbf{u}_{2}^{\prime },\mathbf{v}_{2}-\mathbf{u}_{2}^{\prime }\right) + 
\notag \\
\left\langle \phi \left( \mathbf{u}_{1}^{\prime },\mathbf{u}_{2}^{\prime
}\right) ,\left( \mathbf{v}_{1}-\mathbf{u}_{1}^{\prime },\mathbf{v}_{2}-%
\mathbf{u}_{2}^{\prime }\right) \right\rangle _{V^{\prime }\times V}  \notag
\\
+g_{1}\dint_{\Omega _{1}}\left\vert D\left( \mathbf{v}_{1}\right)
\right\vert dx-g_{1}\dint_{\Omega _{1}}\left\vert D\left( \mathbf{u}%
_{1}^{\prime }\right) \right\vert dx+g_{2}\dint_{\Omega _{2}}\left\vert
D\left( \mathbf{v}_{2}\right) \right\vert dx-g_{2}\dint_{\Omega
_{2}}\left\vert D\left( \mathbf{u}_{2}^{\prime }\right) \right\vert dx 
\notag \\
\geq \dint_{\Omega _{1}}\mathbf{f}_{1}\cdot \left( \mathbf{v}_{1}-\mathbf{u}%
_{1}^{\prime }\right) dx+\dint_{\Omega _{2}}\mathbf{f}_{2}\cdot \left( 
\mathbf{v}_{2}-\mathbf{u}_{2}^{\prime }\right) dx\ \text{ \ }\forall \left( 
\mathbf{v}_{1},\mathbf{v}_{2}\right) \in V,  \tag{3.10}  \label{3.10}
\end{gather}

Now, choosing $\mathbf{v}_{1}=\mathbf{u}_{1}^{\prime }$ and $\mathbf{v}_{2}=%
\mathbf{u}_{2}^{\prime }$ as test function in inequality (\ref{3.9}) and $%
\mathbf{v}_{1}=\mathbf{u}_{1}$ and $\mathbf{v}_{2}=\mathbf{u}_{2}$ as test
function in inequality (\ref{3.10}). It follows by subtracting the two
obtained inequalities and using lemma 3.1, the transmission conditions, the
definition of the space $V$ and some calculations%
\begin{gather}
B_{1}\left( \mathbf{w}_{1}^{\prime }-\mathbf{w}_{1},\mathbf{u}_{1},\mathbf{u}%
_{1}^{\prime }-\mathbf{u}_{1}\right) +B_{2}\left( \mathbf{w}_{2}^{\prime }-%
\mathbf{w}_{2},\mathbf{u}_{2},\mathbf{u}_{2}^{\prime }-\mathbf{u}_{2}\right)
+  \notag \\
\dfrac{1}{2}\dint_{\Gamma _{0}}\left\vert \mathbf{u}_{1}^{\prime }-\mathbf{u}%
_{1}\right\vert ^{2}\left( \mathbf{w}_{1}^{\prime }\cdot \mathbf{n}\right)
d\gamma _{0}-\dfrac{1}{2}\dint_{\Gamma _{0}}\left\vert \mathbf{u}%
_{2}^{\prime }-\mathbf{u}_{2}\right\vert ^{2}\left( \mathbf{w}_{2}^{\prime
}\cdot \mathbf{n}\right) d\gamma _{0}  \notag \\
+\mu _{1}\dint_{\Omega _{1}}\left( \left\vert D\left( \mathbf{u}_{1}^{\prime
}\right) \right\vert ^{p_{_{1}}-2}D\left( \mathbf{u}_{1}^{\prime }\right)
-\left\vert D\left( \mathbf{u}_{1}\right) \right\vert ^{p_{_{1}}-2}D\left( 
\mathbf{u}_{1}\right) \right) \cdot D\left( \mathbf{u}_{1}^{\prime }-\mathbf{%
u}_{1}\right) dx  \notag \\
+\mu _{2}\dint_{\Omega _{2}}\left( \left\vert D\left( \mathbf{u}_{2}^{\prime
}\right) \right\vert ^{p_{2}-2}D\left( \mathbf{u}_{2}^{\prime }\right)
-\left\vert D\left( \mathbf{u}_{2}\right) \right\vert ^{p_{_{2}}-2}D\left( 
\mathbf{u}_{2}\right) \right) \cdot D\left( \mathbf{u}_{2}^{\prime }-\mathbf{%
u}_{2}\right) dx\leq 0.  \tag{3.11}  \label{3.11}
\end{gather}

Observe that for every $x,y\in 
\mathbb{R}
^{n},$%
\begin{equation}
\left( \left\vert x\right\vert ^{p-2}x-\left\vert y\right\vert
^{p-2}y\right) \cdot \left( x-y\right) \geq c\dfrac{\left\vert
x-y\right\vert ^{2}}{\left( \left\vert x\right\vert +\left\vert y\right\vert
\right) ^{2-p}},\text{ \ }1<p\leq 2.  \tag{3.12}  \label{3.12}
\end{equation}

Then, inequality (\ref{3.11}) becomes%
\begin{gather}
\mu _{1}\dint_{\Omega _{1}}\dfrac{\left\vert D\left( \mathbf{u}_{1}^{\prime
}-\mathbf{u}_{1}\right) \right\vert ^{2}}{\left( \left\vert D\left( \mathbf{u%
}_{1}\right) \right\vert +\left\vert D\left( \mathbf{u}_{1}^{\prime }\right)
\right\vert \right) ^{2-p_{_{1}}}}dx+\mu _{2}\dint_{\Omega _{2}}\dfrac{%
\left\vert D\left( \mathbf{u}_{2}^{\prime }-\mathbf{u}_{2}\right)
\right\vert ^{2}}{\left( \left\vert D\left( \mathbf{u}_{2}\right)
\right\vert +\left\vert D\left( \mathbf{u}_{2}^{\prime }\right) \right\vert
\right) ^{2-p_{_{2}}}}dx  \notag \\
\leq c\left\vert B_{1}\left( \mathbf{w}_{1}^{\prime }-\mathbf{w}_{1},\mathbf{%
u}_{1},\mathbf{u}_{1}^{\prime }-\mathbf{u}_{1}\right) \right\vert
+c\left\vert B_{2}\left( \mathbf{w}_{2}^{\prime }-\mathbf{w}_{2},\mathbf{u}%
_{2},\mathbf{u}_{2}^{\prime }-\mathbf{u}_{2}\right) \right\vert  \tag{3.13}
\label{3.13}
\end{gather}

On the other hand, the application of Korn's and H\"{o}lder's inequalities
leads for $i=1,2$ to%
\begin{gather}
\left\Vert \mathbf{u}_{i}^{\prime }-\mathbf{u}_{i}\right\Vert _{V\left(
\Omega _{i}\right) }^{p_{i}}  \notag \\
\leq c\left( \dint_{\Omega _{i}}\dfrac{\left\vert D\left( \mathbf{u}%
_{i}^{\prime }-\mathbf{u}_{i}\right) \right\vert ^{2}}{\left( \left\vert
D\left( \mathbf{u}_{i}\right) \right\vert +\left\vert D\left( \mathbf{u}%
_{i}^{\prime }\right) \right\vert \right) ^{2-p_{i}}}dx\right) ^{\frac{p_{i}%
}{2}}\left( \dint_{\Omega _{i}}\left( \left\vert D\left( \mathbf{u}%
_{i}^{\prime }\right) \right\vert +\left\vert D\left( \mathbf{u}_{i}\right)
\right\vert \right) ^{p_{i}}dx\right) ^{\frac{2-p_{i}}{2}}\text{ .} 
\tag{3.14}  \label{3.14}
\end{gather}

This yields, taking into account (\ref{3.2}), (\ref{3.13}) and H\"{o}lder's
inequality%
\begin{gather*}
\left\Vert \left( \mathbf{u}_{1}^{\prime }-\mathbf{u}_{1},\mathbf{u}%
_{2}^{\prime }-\mathbf{u}_{2}\right) \right\Vert _{V}^{2}\leq \\
c\left\Vert \mathbf{w}_{1}^{\prime }-\mathbf{w}_{1}\right\Vert _{L^{\frac{3n%
}{n-1}}\left( \Omega _{1}\right) ^{n}}\left\Vert \mathbf{u}_{1}\right\Vert
_{L^{p_{1}}\left( \Omega _{1}\right) ^{n}}\left\Vert \mathbf{u}_{1}^{\prime
}-\mathbf{u}_{1}\right\Vert _{L^{\frac{3n}{n-1}}\left( \Omega _{1}\right)
^{n}}+ \\
c\left\Vert \mathbf{w}_{2}^{\prime }-\mathbf{w}_{2}\right\Vert _{L^{\frac{3n%
}{n-1}}\left( \Omega _{2}\right) ^{n}}\left\Vert \mathbf{u}_{2}\right\Vert
_{L^{p_{2}}\left( \Omega _{2}\right) ^{n}}\left\Vert \mathbf{u}_{2}^{\prime
}-\mathbf{u}_{2}\right\Vert _{L^{\frac{3n}{n-1}}\left( \Omega _{2}\right)
^{n}}.
\end{gather*}

Thus, Sobolev' embedding leads via the estimate (\ref{3.2}) to%
\begin{gather}
\left\Vert \left( \mathbf{u}_{1}^{\prime }-\mathbf{u}_{1},\mathbf{u}%
_{2}^{\prime }-\mathbf{u}_{2}\right) \right\Vert _{L^{\frac{3n}{n-1}}\left(
\Omega _{1}\right) ^{n}\times L^{\frac{3n}{n-1}}\left( \Omega _{2}\right)
^{n}}\leq  \notag \\
c\left\Vert \left( \mathbf{w}_{1}^{\prime }-\mathbf{w}_{1},\mathbf{w}%
_{2}^{\prime }-\mathbf{w}_{2}\right) \right\Vert _{L^{\frac{3n}{n-1}}\left(
\Omega _{1}\right) ^{n}\times L^{\frac{3n}{n-1}}\left( \Omega _{2}\right)
^{n}}.  \tag{3.15}  \label{3.15}
\end{gather}

Hence, by virtue of Schauder's fixed point theorem, the mapping $\mathbf{%
\tciLaplace }$ admits a fixed point $\left( \mathbf{u}_{1},\mathbf{u}%
_{2}\right) =\mathbf{\tciLaplace }\left( \mathbf{u}_{1},\mathbf{u}%
_{2}\right) ,$ which solves the problem (\textit{P}$_{\text{v}}$).

\end{document}